
\documentclass[12pt,draftcls,onecolumn]{IEEEtran}

\IEEEoverridecommandlockouts                              

\overrideIEEEmargins

\usepackage{ifpdf}
\ifpdf
    \usepackage{graphicx}
    \usepackage{epstopdf}
    \DeclareGraphicsRule{.eps}{pdf}{.pdf}{`epstopdf #1}
    \pdfcompresslevel=9
    \def\Ebox#1#2{%
	\includegraphics[width=#1\hsize]{figures/#2}
    }
\else
    \usepackage{graphicx}
    \def\Ebox#1#2{%
	\includegraphics[width=#1\hsize]{figures/#2.eps}
    }
\fi

\usepackage{mathptmx} 
\usepackage{times} 
\usepackage{amsmath} 
\usepackage{amssymb}  
\usepackage{tabularx}
\usepackage{algorithm}
\usepackage{algorithmic}
\usepackage[dvipsnames,usenames]{color}
\usepackage[colorlinks,%
linkcolor=BrickRed,%
filecolor=BrickRed,%
citecolor=RoyalPurple,%
]{hyperref}

\usepackage{color}

\newcommand{\spm}[1]{{\bf \color{Mulberry}#1}}

\usepackage{tabularx}

\title{\LARGE \bf
Feedback Particle Filter}

\author{Tao Yang, Prashant G. Mehta and Sean P. Meyn
\thanks{T. Yang and P. G. Mehta are with the Coordinated Science Laboratory and
the Department of Mechanical Science and Engineering at the University of
Illinois at Urbana-Champaign (UIUC)
{\tt\small taoyang1@illinois.edu; mehtapg@illinois.edu}}
\thanks{S. P. Meyn is with the Department of Electrical and Computer
Engineering at the University of Florida
{\tt\small meyn@ece.ufl.edu}}%
\thanks{Financial support from the AFOSR
grant FA9550-09-1-0190  and the NSF grant EECS-0925534 is gratefully acknowledged.}%
\thanks{The conference version of this paper appeared
in~\cite{YangMehtaMeyn_acc11,YangMehtaMeyn_cdc11}.}%
}


\def\qed{\hspace*{\fill}~\IEEEQED\par\endtrivlist\unskip}

\def\sTriangle{\hbox{\small $\triangle$}}
\def\Lap{\Delta}

\def\hap{\hat{p}}

\def\ddx{\frac{\ud}{\ud x}}
\def\ddt{\frac{\ud}{\ud t}}

\def\Re{\mathbb{R}}

\def\KL{\,\text{\rm KL}}
\def\argmin{\mathop{\text{\rm arg\,min}}}
\def\ind{\text{\rm\large 1}}
\def\varble{\,\cdot\,}


\def\pyx{p_{\text{\tiny$ Y | X$}}}
\def\py{p_{\text{\tiny$Y$}}}

\def\ddx{\frac{\ud}{\ud x}}
\def\ddt{\frac{\ud}{\ud t}}

\usepackage{eufrak}

\def\varble{\,\cdot\,}

\def\Inov{I} 

\def\Lemma#1{Lemma~\ref{#1}}
\def\Proposition#1{Prop.~\ref{#1}}
\def\Theorem#1{Thm.~\ref{#1}}

\def\Sec#1{Sec.~\ref{#1}}
\def\Fig#1{Fig.~\ref{#1}}
\def\Appendix#1{App.~\ref{#1}}

\def\notes#1{\marginpar{\tiny #1}\typeout{Notes!
Notes!
Notes!
}}
\renewcommand{\notes}[1]{\typeout{notes!}}

\def\FRAC#1#2#3{\genfrac{}{}{}{#1}{#2}{#3}}

\def\half{{\mathchoice{\FRAC{1}{1}{2}}%
{\FRAC{2}{1}{2}}%
{\FRAC{3}{1}{2}}%
{\FRAC{4}{1}{2}}}}

\def\bbbone{{\mathchoice {\rm 1\mskip-4mu l} {\rm 1\mskip-4mu l}
{\rm 1\mskip-4.5mu l} {\rm 1\mskip-5mu l}}}
\def\ind{\bbbone}

\newcommand{\field}[1]{\mathbb{#1}}
\newcommand{\tr}{\mbox{tr}}

\def\Re{\field{R}}
\def\v{{\sf K}}
\def\w{{\sf \Omega}}

\def\Sec#1{Sec.~\ref{#1}}

\def\eqdef{\mathrel{:=}}

\def\transpose{{\hbox{\rm\tiny T}}}

\def\clB{{\cal B}}
\def\clE{{\cal E}}
\def\clL{{\cal L}}
\def\clP{{\cal P}}
\def\clZ{{\cal Z}}
\def\clY{{\cal Y}}

\def\bfmath#1{{\mathchoice{\mbox{\boldmath$#1$}}%
{\mbox{\boldmath$#1$}}%
{\mbox{\boldmath$\scriptstyle#1$}}%
{\mbox{\boldmath$\scriptscriptstyle#1$}}}}

\def\Inov{I} 

\def\bfmX{\bfmath{X}}

\def\bfmZ{\bfmath{Z}}

\newcounter{rmnum}
\newenvironment{enum}{\begin{list}{{\upshape \arabic{rmnum})}}{\usecounter{rmnum}
\setlength{\leftmargin}{6pt}
\setlength{\rightmargin}{4pt}
\setlength{\itemindent}{-1pt}
}}{\end{list}}

\newenvironment{romannum}{\begin{list}{{\upshape (\roman{rmnum})}}{\usecounter{rmnum}
\setlength{\leftmargin}{6pt}
\setlength{\rightmargin}{4pt}
\setlength{\itemindent}{-1pt}
}}{\end{list}}

\newcounter{anum}

\renewcommand{\varepsilon}{\text{\usefont{OML}{cmr}{m}{n}\symbol{15}}}


\newcommand{\ud}{\,\mathrm{d}}

\def\Expect{{\sf E}}

\def\Prob{{\sf P}}

\def\Expect{{\sf E}}

\def\spm#1{\notes{\textcolor{blue}{SPM: #1}}}

\def\wham#1{\smallbreak\noindent\textbf{\textit{#1}}}

\usepackage{ulem}

\newtheorem{theorem}{Theorem}[section]

\newtheorem{proposition}[theorem]{Proposition}
\newtheorem{lemma}[theorem]{Lemma}

\newtheorem{definition}{Definition}

\newtheorem{remark}{Remark}

\begin{document}
\normalem
\maketitle

\vspace{-0.5in}
\begin{abstract}

A new formulation of the particle filter for nonlinear
filtering is presented, based on concepts from optimal control,
and from the mean-field game theory. The optimal control is chosen so
that the posterior distribution of a particle matches as closely as possible
the posterior distribution of the true state given the observations.  This is achieved by introducing
a cost function, defined by  the Kullback-Leibler (K-L) divergence between the
actual posterior, and the posterior of any particle.

The optimal control input is characterized by a certain Euler-Lagrange (E-L) equation, and is shown to admit an innovation error-based feedback structure.  For diffusions with continuous observations, the value of the optimal control solution is ideal.  The two posteriors match exactly, provided they are initialized with identical priors.  
 The {\em feedback particle filter} is defined by a family of stochastic systems, each evolving under this optimal control law.
 \spm{defining FPF as a control system seemed odd to me, so I changed the sentence.}

%

A numerical algorithm is introduced and implemented in two general examples,  and a neuroscience application involving coupled oscillators. Some preliminary numerical comparisons between the feedback particle filter and the bootstrap particle filter are described.

\end{abstract}

\section{Introduction} 
\label{sec:intro}

\def\fp{(\ref{eqn:Signal_Process}, \ref{eqn:Obs_Process})}

We consider a scalar filtering problem:
\begin{subequations}
\begin{align}
\ud X_t &= a(X_t)\ud t + \sigma_B \ud B_t,
\label{eqn:Signal_Process}
\\
\ud Z_t &= h(X_t)\ud t + \sigma_W \ud W_t,
\label{eqn:Obs_Process}
\end{align}
\end{subequations}
where $X_t\in\Re$ is the state at time $t$, $Z_t \in\Re$ is the
observation process, $a(\varble)$, $h(\varble)$ are $C^1$
functions, and $\{B_t\}$, $\{W_t\}$ are mutually independent
standard Wiener processes.  Unless otherwise noted, the stochastic
differential equations (SDEs) are expressed in It\^{o} form.

The objective of the filtering problem is to 
compute or approximate 
the
posterior distribution of $X_t$ given the history $\clZ_t :=
\sigma(Z_s:  s \le t)$. 
The posterior $p^*$ is defined so
that, for any measurable set $A\subset \Re$,
\begin{equation}
\int_{x\in A} p^*(x,t)\, \ud x   = \Prob\{ X_t \in A\mid \clZ_t \}.
\label{e:pXDef}
\end{equation}
The filter is infinite-dimensional since it
defines the evolution, in  the space of probability measures,
of $\{p^*(\varble ,t) : t\ge 0\}$.  If $a(\varble)$, $h(\varble)$ are linear functions, the solution is given by the
finite-dimensional Kalman filter. The theory of nonlinear
filtering is described in the classic monograph~\cite{kal80}.

The article~\cite{budchelee07} surveys numerical methods to approximate the nonlinear filter.
One approach described in this survey is particle filtering. 

The particle filter is a simulation-based algorithm to approximate the
filtering task~\cite{HandschinMayne69,gorsalsmi93,DouFreGor01}. The key step is the
construction of $N$ stochastic processes $\{X^i_t : 1\le i \le N\}$.
The value $X^i_t \in \Re$ is the state for the $i^{\text{th}}$
particle at time $t$. For each time $t$, the empirical distribution
formed by, the ``particle population'' is used to approximate the conditional distribution.  Recall that this is  defined for any measurable set $A\subset\Re$ by,
\begin{equation}
p^{(N)}(A,t) = \frac{1}{N}\sum_{i=1}^N \ind\{ X^i_t\in A\}.
\label{e:piN}
\end{equation}

\smallskip

A common approach in particle filtering is called {\em
sequential importance sampling}, where particles are generated
according to their importance weight at every time stage~\cite{DouFreGor01,budchelee07}. By
choosing the sampling mechanism properly, particle filtering
can approximately propagate the posterior distribution, with the
accuracy improving as $N$ increases~\cite{Crisan_Doucet_02}.  

\smallskip

The objective of this paper is to introduce an alternative approach to
the construction of a particle filter
for~\eqref{eqn:Signal_Process}-\eqref{eqn:Obs_Process} inspired by mean-field optimal
control techniques;
cf.,~\cite{huacaimal07,YinMehtaMeynShanbhag12TAC}.
In this approach,
the model for the $i^{\text{th}}$ particle is defined by a controlled system,
\begin{equation}
\ud X^i_t = a(X^i_t) \ud t + \sigma_B \ud B^i_t + \ud U^i_t ,
\label{eqn:particle_filter}
\end{equation}
where $X^i_t \in \Re$ is the state for the $i^{\text{th}}$
particle at time $t$, $ U^i_t$ is its
control input, and $\{B^i_t\}$ are mutually independent
standard Wiener processes.  Certain additional assumptions
are made regarding admissible forms of control input.
\spm{Please do not delete this note:  What happens if we have
correlation?  There is NO reason to assume that the $B^i$ are independent.}

Throughout the paper we denote conditional distribution of a particle
$X^i_t$ given $\clZ_t$ by $p$ where, just as in the definition of $p^*$:
\begin{equation}
\int_{x\in A} p(x,t)\, \ud x   = \Prob\{ X^i_t \in A\mid \clZ_t \}.
\label{e:pDef}
\end{equation}
The initial conditions $\{X^i_0\}_{i=1}^N$  are assumed to be i.i.d., and drawn from
initial distribution $p^*(x,0)$ of $X_0$ (i.e., $p(x,0)=p^*(x,0)$).
 \spm{Again, i.i.d.\ is NOT necessary!!  Correlation may reduce variance  -- we don't know.}




The control problem is to choose the control input $U^i_t$ so
that $p$ approximates $p^*$, and consequently $p^{(N)}$
(defined in \eqref{e:piN}) approximates $p^*$ for large $N$. 
The synthesis of the control input  
is cast
as an optimal control
problem, with the Kullback-Leibler metric serving as the cost
function. The optimal control input is obtained via analysis of
the first variation.

The main result of this paper is to derive an explicit formula for the
optimal control input, and demonstrate that under general conditions
we obtain an exact match:  $p=p^*$ under optimal control.    The
optimally controlled dynamics of the $i^{\text{th}}$ particle have the
following It\^{o} form,
\spm{careful you aren't using ${}'$ for transpose anywhere!  Use ${}^\transpose$ instead.}
\begin{equation}
\ud X^i_t = a ( X^i_t) \ud t + \sigma_B \ud B^i_t +
\underbrace{\v(X^i_t,t) \ud \Inov^i_t + \w(X^i_t,t) \ud t}_{\text{optimal control, $ \ud U^{i*}_t $} },
\label{eqn:particle_filter_nonlin_intro}
\end{equation}
in which $\w(x,t):=\frac{1}{2} \sigma_W^2
  \v(x,t) \v'(x,t)$, $\v'(x,t) = \frac{\partial \v}{\partial x}(x,t)$,
and  $\Inov^i$ is similar to the \textit{innovation process} that appears in the nonlinear filter,\begin{equation}
\ud \Inov^i_t \eqdef \ud Z_t - \frac{1}{2}    (h(X^i_t) + \hat{h}_t) \ud t,
\label{e:in}
\end{equation}
where   $\hat{h}_t := {\sf E} [h(X^i_t)|\clZ_t] = \int h(x) p(x,t) \ud x$.  
In a numerical
implementation, we approximate
\spm{too important to hide!}
\begin{equation}
\hat{h}_t 
	\approx  \hat{h}^{(N)}_t 
 	:= 
	\frac{1}{N}
\sum_{i=1}^N h(X^i_t) \, .
\label{e:hah}
\end{equation}

The gain function $\v$ is shown to be the solution to
the following Euler-Lagrange boundary value problem (E-L BVP):
\begin{equation}
-\frac{\partial}{\partial x}\left( \frac{1}{p(x,t)} \frac{\partial
  }{\partial x} \{ p(x,t) \v(x,t) \}\right) = \frac{1}{\sigma_W^2} h'(x),
\label{eqn:EL_v_intro}
\end{equation}
with boundary conditions $\lim_{x\rightarrow \pm \infty}
p(x,t)\v(x,t)= 0$, where $h'(x)=\ddx h\, (x)$.

Note that the gain function needs to be obtained for each value of
time $t$.  If the right hand side of \eqref{eqn:EL_v_intro} is non-negative valued, it then  
follows from the minimum
principle for elliptic BVPs that the gain function $\v$ is non-negative valued \cite{Evans:98}.  
\spm{Asking again in July of 2012:  is it strictly positive when $h$ is increasing and non-constant? 
If so, we should say that $\v$ takes on positive values.}


The contributions of this paper are as follows:

\noindent {$\bullet$ \bf Variational Problem.}  The construction of
the feedback particle filter is based on a variational problem,
where the cost function is the Kullback-Leibler
(K-L) divergence between $p^*(x,t)$ and $p(x,t)$.  The feedback
particle
filter~\eqref{eqn:particle_filter_nonlin_intro}-\eqref{eqn:EL_v_intro},
including the formula~\eqref{e:in} for the innovation error and the E-L~BVP~\eqref{eqn:EL_v_intro}, 
is obtained via analysis of the
first variation.
\spm{Omitted "We" in many places}

\noindent {$\bullet$ \bf Consistency.}   
The particle filter
model~\eqref{eqn:particle_filter_nonlin_intro}
is consistent with nonlinear filter in the following sense: Suppose the gain function $\v(x,t)$ is obtained as the solution to~\eqref{eqn:EL_v_intro},  and the priors are consistent, $p(x,0)=p^*(x,0)$. 
Then, for all $t\ge 0$ and all $x$,
\[
p(x,t) = p^*(x,t).
\] 

\noindent {$\bullet$ \bf Algorithms.}
Numerical techniques are proposed  for synthesis of the gain function
  $\v(x,t)$.  If $a(\cdot)$ and $h(\cdot)$ are linear and the density
  $p^*$ is Gaussian, then the gain function is simply the Kalman gain.
  At time $t$, it is a constant given in terms of variance alone.
  The variance is approximated empirically as a sample covariance.

In the nonlinear case,  numerical approximation techniques are described.  Other approaches using sum of Gaussian approximation also exist but are omitted on account of space. Details for the latter can be found in~\cite{YangMehtaMeyn_cdc11}.

\smallskip

In recent decades, there have been many important advances in
importance sampling based approaches for particle
filtering; cf.,~\cite{DouFreGor01,budchelee07,Xiong}.
A crucial distinction here is that there is no
resampling of particles.

We believe that the introduction of control in the feedback particle
filter has several useful features/advantages:

\spm{too timid: In certain cases, there may be
some advantages to using a feedback particle filter
formulation:}
\spm{eliminated bullets to separate from contributions above}

\wham{Does not require sampling.}  There is no re-sampling required as in
the conventional particle filter.  This property allows the feedback particle filter to be
flexible with regards to implementation and does not suffer
from sampling-related issues. 

\wham{Innovation error.}
The innovation error-based feedback structure is a key feature of the
feedback particle filter \eqref{eqn:particle_filter_nonlin_intro}.
The innovation error in  \eqref{eqn:particle_filter_nonlin_intro}
is
based on the average value
of the prediction $h(X^i_t)$ of the $i^{\text{th}}$-particle
and the prediction $\hat{h}_t$ due to the entire population.

The feedback structure is easier to see when the filter is expressed
in its Stratonovich form:
  \spm{Talk about Kick Ass!}
\begin{equation}
\ud X^i_t = a(X^i_t) \ud t + \ud B^i_t +  \v(X^i,t) \circ \left( \ud Z_t - \frac{1}{2}    (h(X^i_t) + \hat{h}_t) \ud t\right).
\label{eqn:FPF_Strato}
\end{equation}
Given that the Stratonovich form provides a mathematical
interpretation of the (formal) ODE model \cite[Section 3.3]{Oksendal_book}, we also obtain the ODE model of the filter.
Denoting $Y_t \doteq \frac{\ud Z_t}{\ud t} $ and white noise
process $\dot{B}^i_t \doteq \frac{\ud B_t^i}{\ud t} $, the ODE model of the
filter is given by,
\begin{equation}
\frac{\ud X^i_t}{\ud t} = a(X^i_t) + \dot{B}^i_t +  \v(X^i,t) \cdot \left( Y_t - \frac{1}{2}    (h(X^i_t) + \hat{h}_t)\right). \nonumber
\end{equation}

The feedback particle filter thus provides for a generalization of the
Kalman filter to nonlinear systems, where the innovation error-based
feedback structure of the control is preserved (see
\Fig{fig:fig_FPF_KF}).  For the linear case, the optimal gain function
is the Kalman gain.  For the nonlinear case,
the Kalman gain is replaced by a nonlinear function of the state.

\wham{Feedback structure.} 
Feedback is important on account of the issue of {\em robustness}.
A filter is based on an idealized model of the
underlying dynamic process that is often nonlinear, uncertain and
time-varying.  The self-correcting property of the feedback provides  
robustness, allowing one to tolerate a degree of uncertainty inherent
in any model. 

In contrast, a conventional particle filter is based upon 
importance sampling.  Although the innovation error is central to the Kushner-Stratonovich's stochastic partial
differential equation (SPDE) of nonlinear filtering, it is conspicuous
by its absence in a conventional particle filter.

Arguably, the structural aspects of the Kalman filter have been as important as the algorithm
itself in design, integration, testing and operation of the overall
system.  Without such structural features, it is a challenge to
create scalable cost-effective solutions.  

The ``innovation'' of the feedback particle filter lies in the
(modified) 
definition of innovation error for a particle filter.  Moreover, the feedback control structure that existed thusfar only for Kalman filter
now also exists for particle filters (compare parts~(a) and~(b) of Fig.~\ref{fig:fig_FPF_KF}).         


\wham{Variance reduction.} 
Feedback can
help reduce the high variance that is sometimes observed in the
conventional particle filter. Numerical results in \Sec{sec:numerics}
support this claim ---  See \Fig{fig:fig_comp}
for a comparison of the feedback particle filter and the
bootstrap filter. 
\spm{However,
it is likely that the stability of the FPF will depend on the
stability properties of the nonlinear filter.   This is an open
question, and the possibility of improving variance at the cost
of bias is another big open question.  Let's discuss.
\\
Back to this in July...  The instability of the BPF is hidden.  When it comes up later it is very confusing.  I'll attempt to clarify, but please have a look.}

\wham{Ease of design, testing and operation.}  On account of structural
features, feedback particle filter-based solutions are expected to be more robust,
cost-effective, and easier to debug and implement.

\begin{figure}
    \centering
\vspace{-0.25in}
    \includegraphics[width=\columnwidth]{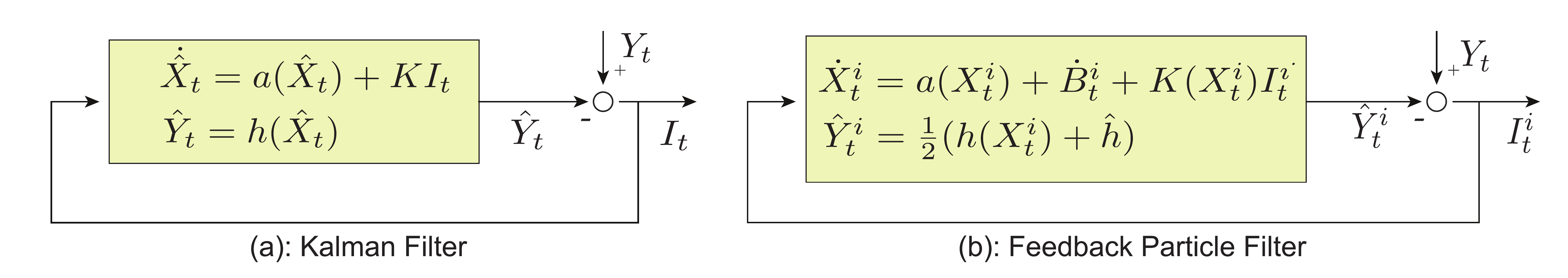}
    \vspace{-0.25in}
\caption{Innovations error-based feedback structure for (a) Kalman
  filter and (b) nonlinear feedback particle filter.
}\vspace{-.4cm}
    \label{fig:fig_FPF_KF}
\end{figure}

\wham{Applications.}
Bayesian inference is an important paradigm used to model
functions  
of certain neural circuits in the brain~\cite{Bayesian_Brain}.
Compared to techniques that rely on importance sampling, a
feedback particle filter may provide a more neuro-biologically
plausible model to implement filtering and inference
functions~\cite{YangMehtaMeyn_acc11}.  This is illustrated here
with the aid of a filtering problem involving nonlinear
oscillators.  Another application appears in~\cite{TiltonLizMehta12ACC}.



%

\subsection{Comparison with Relevant Literature}

Our work is motivated by recent development in mean-field
games, but the focus there has been primarily on optimal control~\cite{huacaimal07,YinMehtaMeynShanbhag12TAC}.

In nonlinear filtering, there are two  directly related works:
Crisan and Xiong~\cite{CriXiong09}, and   Mitter and
Newton~\cite{MitterNewton04}. 
In each of these papers,   a controlled system is introduced, of the form
\begin{equation*}
\ud X^i_t = \left(a(X^i_t) + u(X^i_t,t)\right) \ud t + \sigma_B \ud B^i_t.
\end{equation*}
The objective is to choose the control input to obtain a solution of
the nonlinear filtering problem.

The approach in~\cite{MitterNewton04} is based on consideration of a
finite-horizon optimal control problem.  It leads to an HJB equation
whose solution yields the optimal control input.

The work of Crisan and Xiong is closer to our
paper in terms of both goals and approaches.
\spm{note subtle change here on "goals"}
Although we were not aware of their work prior to submission
of our original conference papers~\cite{YangMehtaMeyn_acc11,YangMehtaMeyn_cdc11},
Crisan and Xiong provide an explicit expression for a control law
that is similar to the feedback particle filter, with some important differences.  One,
the considerations of Crisan and Xiong (and also of Newton and
Mitter) require introduction of a {\em smooth approximation} of
the process ``$\frac{\ud Z}{\ud t} - \hat{h}_t$,'' which we
avoid with our formulation.  Two, the filter derived in Crisan
and Xiong has a structure based on a gain feedback with respect
to the smooth approximation, while the feedback particle filter is
based on the formula for innovation error $I_t^i$ as given
in~\eqref{e:in}.  This formula is fundamental to
construction of particle filters in continuous time settings.
We clarify here that the formula for innovation error is {\em not}
assumed,  comes about as a result of the analysis of the variational problem.
\spm{Why don't you like "the"?
\\
We have "the KF", so why not "the FPF"?}

Remarkably, both the feedback particle filter and
Crisan and Xiong's filter require solution of the same
boundary value problem, and as such have the same computational
complexity.  The BVP is solved to obtain the gain function.  However,
the particular solution described in Crisan and Xiong for the BVP may
not work in all cases, including the linear Gaussian case.
Additional discussion appears in~\Sec{sec:comparison}.
\spm{I feel this is far too weak.  We should say that their proposed representation of the gain is not correct in many cases}

Apart from these two works, Daum and Huang have introduced the
{\em information flow filter} for the continuous-discrete time filtering
problem~\cite{DaumHuang10}.  Although an explicit formula for the
filter is difficult to obtain, a closely related form of the boundary value
problem appears in their work.  There is also an important
discussion of both the limitations of the conventional particle
filter, and the need to incorporate feedback to ameliorate
these issues. Several numerical experiments are presented that
describe high variance and robustness issues, especially where
signal models are unstable.
These results provide significant motivation to the work described here.

\subsection{Outline}

The variational setup is described in~\Sec{sec:prelim}:
It begins with a
discussion of the continuous-discrete filtering problem: the
equation for dynamics is defined by~\eqref{eqn:Signal_Process}, but the
observations are made only at discrete times.  The
continuous-time filtering problem
(for~\eqref{eqn:Signal_Process}-\eqref{eqn:Obs_Process}) is obtained as a limiting case of
the continuous-discrete problem.

The feedback particle filter is introduced in
\Sec{sec:continuous_continuous}.
Extension to the multivariable case is briefly described in~\Sec{sec:FPF_multi-variable}, 
followed by a comparison with Crisan and Xiong's filter in~\Sec{sec:comparison}.
\spm{I'm not sure this is useful here:
\\
Additional details are planned for a future publication.
}

Algorithms are discussed in
\Sec{sec:nonlinear}, and numerical examples are described in
\Sec{sec:numerics}, 
 including the neuroscience application
involving coupled oscillator models.   These models (also considered in
our earlier mean-field control paper~\cite{YinMehtaMeynShanbhag12TAC}) provided
some of the initial motivation for the present work. 

\spm{Not convincing to me:
Apart from
serving a pedagogical purpose, the scalar case is especially
relevant to filtering with coupled oscillator models. 
}



\section{Variational Problem}
\label{sec:prelim}

The control problem posed by any one of the $i^{\text{th}}$ particles
can be cast as a partially observed optimal control problem.  The observations are given by
$\{X^i_t , Z_t \}$, and the state process is two-dimensional,
$\{X^i_t , X_t \}$. In partially observed optimal control
problems, it is typical to take the ``belief state'' $p^*_t$ as
the state process, which is known to serve as a sufficient
statistic for optimal control under general conditions.   Since
our cost function is taken as the KL divergence between $p^*_t$
and $p_t$ (defined in \eqref{e:pXDef} and \eqref{e:pDef},
respectively),  a natural state process for the purposes of
optimal control is the triple $\{X^i_t , p_t, p^*_t \}$. 

The precise formulation of the optimal control problem begins with the continuous time model,  with sampled observations. The equation for dynamics is given
by~\eqref{eqn:Signal_Process}, and the observations are made
only at discrete times $\{t_n\}$:
\begin{align}
Y_{t_n} &= h(X_{t_n}) + W_{t_n}^{\sTriangle},\label{eqn:meas_2}
\end{align}
where $\sTriangle := t_{n+1} - t_n$ and $\{W_{t_n}^{\sTriangle}\}$ is i.i.d and drawn from
$\textbf{\emph{N}}(0,\frac{\sigma^2_W}{\sTriangle})$.
\spm{Conventions I'd prefer:  $\nabla^2$ is the matrix of second derivatives, and $\sTriangle$ the Laplacian, and $\sTriangle$ is used for  increments}

The particle model in this case is a hybrid dynamical system:
For $t\in[t_{n-1},t_{n})$, the $i^{\text{th}}$ particle evolves
according to the stochastic differential equation,
\begin{equation}
 \ud X^i_t = a(X^i_t) \ud t+\sigma_B \ud B^i_t \, ,\quad t_{n-1} \le t < t_{n}\, ,
\label{eqn:dyn_part_hyb}
\end{equation}
where the initial condition $X^i_{t_{n-1}}$ is given. At time
$t=t_{n}$ there is a potential jump that is determined by the
input  $U^i_{t_n}$:
\begin{equation}
X^i_{t_n} = X^i_{t_n^-} + U^i_{t_n}\, ,
\label{eqn:dyn_part_hyb_jump}
\end{equation}
where  $X^i_{t_n^-}$   denotes the right limit of $\{   X^i_t :
t_{n-1} \le t < t_{n}\}$.  The specification
\eqref{eqn:dyn_part_hyb_jump}  defines the initial condition
for the process on the next interval $[t_n,t_{n+1})$.

The filtering problem is to construct a control law that
defines  $\{U^i_{t_n}: n\ge 1\}$ such that $p(\varble,t_n)$ approximates $p^*(\varble,t_n)$
 for each $n\ge 1$.
\spm{This is too vague: the resulting empirical distribution $p^{(N)}$ approximates the
posterior distribution of $X_{t_n}$ given the history $\clZ_n:= \sigma (Z_{t_k} : k\le n)$.
}
To solve this problem we first
define ``belief maps'' that propagate the conditional
distributions of $\bfmX$ and $\bfmX^i$.

\subsection{Belief Maps}
\label{s:believe}

The observation history is denoted 
$\clY_n := \sigma\{Y_{t_i}:
i \leq n,i\in \mathbb{N}\}$. 
For each $n$, various conditional distributions are considered:
\begin{enum}
\item   $p_n^*$ and $p_n^{*-}$:
 The conditional distribution of $X_{t_n}$ given $\clY_n $ and $\clY_{n-1}$, respectively.  
 
\item $p_n$ and $p_n^{-}$:
The conditional distribution of
$X^i_{t_n}$ given $\clY_n $ and $\clY_{n-1}$, respectively.
\end{enum}

These densities evolve according to recursions of the form,
\begin{equation}
p^*_n=\clP^*(p^*_{n-1}, Y_{t_n}),\qquad
p_n=\clP(p_{n-1}, Y_{t_n})\, .
\label{e:pnMaps}
\end{equation}
The mappings $\clP^*$ and $\clP$ can be decomposed into two
parts.   The first part is identical for each of these
mappings:   the transformation that takes $p_{n-1}$ to $p_n^-$
coincides with the mapping from $p_{n-1}^*$ to $p_n^{*-}$. In
each case it is defined by  the Kolmogorov forward
equation associated with
the diffusion on $[t_{n-1},t_n)$.

The second part of the mapping is the transformation
that takes $p_n^{*-}$ to $p_n^*$, which is obtained from Bayes' rule:
Given the observation $Y_{t_n}$ made at time
$t=t_n$, 
\spm{too repetitive: the pdf for the actual state is updated using Bayes rule:}
\begin{equation}
  	p_n^*(s) = \frac{ p_n^{*-}(s)\cdot \pyx(Y_{t_n}|s)}{\py(Y_{t_n})}, \quad s\in\Re,
\label{eqn:pks_1}
\end{equation}
where $\py$ denotes the pdf for $Y_{t_n}$,  and $\pyx(\varble
\mid s)$ denotes the conditional distribution of $Y_{t_n}$
given $X_{t_n}=s$.  Applying \eqref{eqn:meas_2} gives,
\[
\pyx(Y_{t_n}\mid s) = \frac{1}{\sqrt{2\pi \sigma^2_W/\sTriangle}} \exp \left( -
\frac{(Y_{t_n}-h(s))^2}{2\sigma^2_W/\sTriangle} \right).
\]
Combining \eqref{eqn:pks_1} with the forward equation defines $\clP^*$.

The transformation that takes $p_n^{-}$ to $p_n$ depends upon
the  choice of control $U^i_{t_n}$
in~\eqref{eqn:dyn_part_hyb_jump}.  At time $t=t_n$, we seek a control
input $U^i_{t_n}$ that is   {\em admissible}.
\spm{don't like: The space of admissible functions is denoted as $C^2_b$.
\\
Note that "admissible function" is only used a few times, so let's remove "function" and replace by "input"
\\
HELP!   Please check my new terminology, and new definition below.}

\begin{definition}[Admissible Input]
The control sequence $\{U^i_{t_n} : n\ge 0\}$ is {\em admissible} if there is a sequence of maps $\{v_n(x;y_0^n)\}$ such that
$ U^i_{t_n} = v_n(X_{t_n^-}^i,Y_{t_0},\dots,Y_{t_n})$ for each $n$, and moreover,
\begin{romannum}
\item
	$ \Expect[|U^i_{t_n} |]<\infty$, and with probability one,
\[
\lim_{x\rightarrow\pm\infty} v_n(x, Y_{t_0},\dots,Y_{t_n}) p_n^{-} (x) = 0.
\]

\item
	$v_n$ is twice continuously differentiable as a function of $x$.

\item
	$1+v_n'(x)$ is non-zero for all $x$, where   $v_n' (x) = \ddx v_n(x)$.
\end{romannum}
\qed
\end{definition}

We will suppress the dependency of $v_n$ on the observations (and often the time-index $n$), writing
$U^i_{t_n} =v(x)$ when $X^i_{t_n^-}=x$.  Under the assumption that  $1+v'(x)$ is non-zero for
all $x$, we can write,
\begin{equation}
p_n(x^+) =   \frac{p_n^-(x)}{|1 + v' (x)|}\, ,\quad \hbox{where $x^+=x+v(x)$.}
\label{eqn:pks_2}
\end{equation}

\subsection{Variational Problem}

Our goal is to choose an admissible input so that the
mapping $\clP$ approximates the mapping $\clP^*$
in~\eqref{e:pnMaps}. More specifically, given the pdf $p_{n-1}$ we have
already defined the mapping $\clP$ so that $ p_n=\clP(p_{n-1},
Y_{t_n})$.   We denote $\hap^*_n = \clP^*(p_{n-1}, Y_{t_n})$,
and choose $v_n$ so that these pdfs are as close as
possible. We approach this goal through the formulation of an
optimization problem with respect to the KL
divergence metric.  That is, at time $t=t_n$, the function $v_n$ is the solution to the following optimization
problem,
\begin{equation}
v_n(x) = \argmin_v \KL \left( p_n \| \hap^*_n \right).
\label{eqn:optimiz_problem}
\end{equation}
Based on the definitions,  for any $v$ the KL divergence can be expressed,
\begin{equation}
\KL  ( p_n \| \hap^*_n  ) = -\int_{\Re} p_n^-(x) \Bigl\{ \ln |1+v' (x)| +\ln\left({p}_n^-(x+v(x)) \pyx(Y_{t_n}|x+v(x))\right) \Bigr\}\ud x + C,
\label{eqn:KLD}
\end{equation}
where $C = \int_{\Re} p_n^-(x)\ln(p_n^-(x)\py(Y_{t_n}))\ud x$
is a constant that does not depend on $v$; cf.,
\Appendix{cal_KL} for the calculation.

The   solution to \eqref{eqn:optimiz_problem} is described in the following proposition,
whose proof appears in \Appendix{der_EL}.
\begin{proposition}\label{prop one}
Suppose that the admissible input is obtained as the solution to the sequence of optimization problems~\eqref{eqn:optimiz_problem}.
Then for each $n$, the function $v=v_n$  is a solution of the following Euler-Lagrange (E-L) BVP:
\begin{equation}
\ddx \left(\frac{{p}_n^-(x)}{|1+v^{'}(x)|}\right)
=
{p}_n^-(x)\frac{\partial}{\partial v} \left( \ln({p}_n^-(x+v) \pyx(Y_{t_n}|x+v))\right),
\label{eqn:EL_eqn}
\end{equation}
with boundary condition $\lim_{x\rightarrow\pm\infty} v(x)
p_n^{-}(x) = 0$.
\qed
\end{proposition}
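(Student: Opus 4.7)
The plan is to derive the E-L equation by computing the first variation of the functional in \eqref{eqn:KLD} and applying the fundamental lemma of the calculus of variations. Fix $n$, drop the subscript, and write
\[
J(v) \eqdef -\int_{\Re} p_n^-(x)\Bigl\{\ln|1+v'(x)| + \ln\bigl({p}_n^-(x+v(x))\,\pyx(Y_{t_n}\mid x+v(x))\bigr)\Bigr\}\,\ud x,
\]
so that $\KL(p_n\|\hap^*_n)=J(v)+C$ with $C$ independent of $v$. Since $v$ is admissible, $1+v'(x)$ is continuous and nowhere zero, hence of constant sign; without loss of generality $1+v'(x)>0$, so the absolute value can be removed locally and restored at the end.

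Next I would perturb: let $v_\varepsilon = v + \varepsilon \eta$, where $\eta$ is any smooth compactly supported test function on $\Re$ (so that admissibility and the decay condition $\lim_{|x|\to\infty} v_\varepsilon(x)\,p_n^-(x)=0$ are preserved for small $\varepsilon$). Differentiating under the integral,
\[
\frac{\ud}{\ud\varepsilon}J(v_\varepsilon)\Big|_{\varepsilon=0}
= -\int_{\Re} p_n^-(x)\left\{\frac{\eta'(x)}{1+v'(x)} + \frac{\partial}{\partial v}\ln\bigl({p}_n^-(x+v)\,\pyx(Y_{t_n}\mid x+v)\bigr)\,\eta(x)\right\}\ud x .
\]
Setting this to zero at $\varepsilon = 0$ is the condition for $v$ to be a critical point.

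The key step is to integrate by parts in the term containing $\eta'$. Because $\eta$ is compactly supported (and more generally because the admissibility boundary condition $\lim_{x\to\pm\infty}v(x)p_n^-(x)=0$ controls the contribution at infinity), the boundary terms vanish, yielding
\[
\int_{\Re}\eta(x)\left\{\ddx\!\left(\frac{p_n^-(x)}{1+v'(x)}\right) - p_n^-(x)\,\frac{\partial}{\partial v}\ln\bigl({p}_n^-(x+v)\,\pyx(Y_{t_n}\mid x+v)\bigr)\right\}\ud x = 0.
\]
Since this identity holds for every test function $\eta$, the fundamental lemma of the calculus of variations forces the bracketed expression to vanish pointwise, which is exactly \eqref{eqn:EL_eqn} after restoring $|\cdot|$ via the sign convention. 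The stated boundary condition $\lim_{x\to\pm\infty}v(x)p_n^-(x)=0$ is then just the admissibility requirement on the minimizer.

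The main obstacle I anticipate is justifying the differentiation under the integral and the validity of the integration by parts at infinity — in particular, ensuring that the product $p_n^-(x)/(1+v'(x))$ and its relevant derivatives decay sufficiently fast for the boundary terms to vanish. This should follow from the admissibility assumptions (boundedness of $\Expect|U^i_{t_n}|$, the decay condition on $v(x)p_n^-(x)$, and the $C^2$ regularity and non-vanishing of $1+v'$), together with mild tail hypotheses on $p_n^-$, but it is the only nontrivial analytic point; the rest is a direct computation.
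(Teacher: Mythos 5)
Your proposal is correct and follows essentially the same route as the paper: the paper's proof (Appendix~\ref{der_EL}) simply identifies the Lagrangian $\mathcal{L}(x,v,v') = -p_n^-(x)\bigl(\ln|1+v'| + \ln(p_n^-(x+v)\,\pyx(Y_{t_n}|x+v))\bigr)$ and invokes the standard Euler--Lagrange equation $\frac{\partial\mathcal{L}}{\partial v} = \ddx\bigl(\frac{\partial\mathcal{L}}{\partial v'}\bigr)$, whereas you unpack that same statement by carrying out the first variation, integration by parts, and the fundamental lemma explicitly. The extra care you take with the sign of $1+v'$ and the vanishing of boundary terms is a reasonable elaboration of what the paper leaves implicit, not a different argument.
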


We refer to the minimizer as the {\em optimal control
function}.  Additional details on the continuous-discrete time
filter appear in our conference paper~\cite{YangMehtaMeyn_acc11}.

\section{Feedback Particle Filter}
\label{sec:continuous_continuous}

We now consider the continuous time filtering
problem~\fp~introduced in \Sec{sec:intro}.

\subsection{Belief State Dynamics and Control Architecture}
The model for the particle filter is given by the It\^{o} diffusion,
\begin{equation}
\ud X^i_t = a(X^i_t) \ud t + \sigma_B \ud B^i_t + \underbrace{u(X^i_t, t) \ud t + \v (X^i_t,t)
\ud Z_t}_{\ud U^i_t}, \label{eqn:particle_model}
\end{equation}
where $X^i_t \in \Re$ is the state for the $i^{\text{th}}$
particle at time $t$, and $\{B^i_t\}$ are mutually independent
standard Wiener processes.  We assume the initial conditions
$\{X^i_0\}_{i=1}^N$  are i.i.d., independent of $\{B^i_t\}$,
and drawn from the initial
distribution $p^*(x,0)$ of $X_0$.  Both  $\{B^i_t\}$ and $\{X^i_0\}$
are also assumed to be independent of $X_t,Z_t$.

As in~\Sec{sec:prelim}, we impose admissibility requirements on the
control input $U^i_t$ in~\eqref{eqn:particle_model}:
\begin{definition}[Admissible Input]
The control input $U^i_t$ is {\em admissible} if the random variables $u(x,t) $
and $\v(x,t)$ are  $\clZ_t = \sigma(Z_s:s\le t)$ measurable for each $t$.   Moreover,  each $t$,
\begin{romannum}

\item
$ \Expect[| u(X^i_t,t) |  +  | \v(X^i_t,t) |^2 ]<\infty$, and with probability one,
 \begin{subequations}
\begin{eqnarray}
\lim_{x\rightarrow\pm\infty} u(x,t) p(x,t)  &=& 0,
\label{e:adU}
\\
\lim_{x\rightarrow \pm \infty} \v(x,t) p(x,t) &=&  0.
\label{e:adV}
\end{eqnarray}
\end{subequations}
where $p$ is the posterior distribution  of $X^i_t$ given $\clZ_t$, defined in \eqref{e:pDef}.
\item
$u:\Re^2\rightarrow\Re$, $\v:\Re^2\rightarrow\Re$ are twice
continuously differentiable in their first arguments.

\spm{Is there an analog to,
$1+u_n'(x)$ is non-zero for all $x$. }

\end{romannum}
\qed
\end{definition}

The functions $\{ u(x,t),\v (x,t)\}$ represent the
continuous-time counterparts of the optimal control function
$v_n(x)$ (see~\eqref{eqn:optimiz_problem}).   We say that these
functions are \textit{optimal} if $p\equiv p^*$, where recall $p^*$ is the posterior distribution
of $X_t$ given $\clZ_t$ as defined in~\eqref{e:pXDef}.
Given $p^*(\cdot,0)= p(\cdot,0)$, our goal is to choose $\{u,\v
\}$ in the feedback particle filter so that the evolution
equations of these conditional distributions coincide.


The evolution of   $p^*(x,t)$ is described by the Kushner-Stratonovich (K-S) equation:
\spm{Too kiss-ass:  Recall that the celebrated}
\begin{equation}
\ud p^\ast = \clL^\dagger p^\ast \ud t + \frac{1}{\sigma_W^2}( h-\hat{h}_t )(\ud Z_t - \hat{h}_t \ud t)p^\ast, \label{eqn:Kushner_eqn}
\end{equation}
where $ \hat{h}_t = \int h(x) p^*(x,t) \ud x$, and $ \clL^\dagger
p^\ast = - \frac{\partial (p^\ast a)}{\partial
x}+\frac{\sigma_B^2}{2}\frac{\partial^2 p^\ast}{\partial x^2}
$.

The evolution equation of $p(x,t)$ is described next.
The proof appears in \Appendix{apdx:Derivation_FPK}.

\begin{proposition}
\label{thm:FPK}
Consider the process $X^i_t$ that evolves according to the particle
filter model~\eqref{eqn:particle_model}.  The conditional distribution
of $X^i_t$ given the filtration $\clZ_t$, $p(x,t)$, satisfies
the forward equation
\begin{equation}
\ud p = \clL^\dagger p \ud t  - \frac{\partial}{\partial x}\left( \v p \right) \ud Z_t - \frac{\partial}{\partial x}\left( u p
\right) \ud t + \sigma_W^2 \frac{1}{2}\frac{\partial^2}{\partial x^2}\left( p \v^2 \right) \ud t.
\label{eqn:mod_FPK}
\end{equation}
\qed
\end{proposition}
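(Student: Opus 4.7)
The plan is to establish~(\ref{eqn:mod_FPK}) in weak form and then recognize its strong counterpart. Fix an arbitrary test function $\phi\in C_c^\infty(\Re)$ and track $\pi_t(\phi)\eqdef\Expect[\phi(X^i_t)\mid\clZ_t]=\int\phi(x)p(x,t)\,\ud x$.

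Starting from the particle dynamics~(\ref{eqn:particle_model}), apply It\^o's formula to $\phi(X^i_t)$. The driving noises $B^i$ and $W$ are independent, so $\ud\langle B^i,Z\rangle_t=0$ and $\ud\langle X^i\rangle_t=[\sigma_B^2+\sigma_W^2\v^2(X^i_t,t)]\,\ud t$. Expansion yields
\[
\ud\phi(X^i_t)=\bigl[\clL\phi+\phi' u+\tfrac{1}{2}\sigma_W^2\v^2\phi''\bigr](X^i_t,t)\,\ud t+\sigma_B\phi'(X^i_t)\,\ud B^i_t+\phi'(X^i_t)\v(X^i_t,t)\,\ud Z_t,
\]
with $\clL\phi\eqdef a\phi'+\tfrac{1}{2}\sigma_B^2\phi''$. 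Take $\Expect[\cdot\mid\clZ_t]$: the $\ud B^i$-martingale vanishes because $B^i$ is independent of the observation history, and the finite-variation terms pass through the projection (by Fubini and the tower property) to produce the drift $\pi_t(\clL\phi+\phi' u+\tfrac{1}{2}\sigma_W^2\v^2\phi'')\,\ud t$.

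The principal obstacle is the $\ud Z$-stochastic integral, whose integrand $\phi'(X^i_s)\v(X^i_s,s)$ is not $\clZ_s$-adapted. To handle it, decompose $\ud Z_s=h(X_s)\,\ud s+\sigma_W\,\ud W_s$ and exploit that, conditional on $\clZ_s$, the particle $X^i_s$ (a functional of $B^i$ and $\clZ_s$) is independent of the signal $X_s$ (a functional of the independent noise $B$). Combined with the standard innovations representation of $Z$ relative to $\clZ$, a Fujisaki-Kallianpur-Kunita-type optional-projection argument gives
\[
\Expect\!\left[\int_0^t\phi'(X^i_s)\v(X^i_s,s)\,\ud Z_s\,\Big|\,\clZ_t\right]=\int_0^t\pi_s(\phi'\v)\,\ud Z_s.
\]

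Assembling the four contributions yields the weak equation $\ud\pi_t(\phi)=\pi_t(\clL\phi+\phi' u+\tfrac{1}{2}\sigma_W^2\v^2\phi'')\,\ud t+\pi_t(\phi'\v)\,\ud Z_t$. Integration by parts, with boundary terms vanishing by~(\ref{e:adU})--(\ref{e:adV}) and the compact support of $\phi$, rewrites each summand as a pairing $\int\phi\,[\cdot]\,\ud x$: namely $\pi_t(\clL\phi)=\int\phi\,\clL^\dagger p\,\ud x$, $\pi_t(\phi' u)=-\int\phi\,\partial_x(up)\,\ud x$, $\pi_t(\phi'\v)=-\int\phi\,\partial_x(\v p)\,\ud x$, and $\pi_t(\phi''\v^2)=\int\phi\,\partial_{xx}(\v^2 p)\,\ud x$. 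Since $\phi$ is arbitrary, the strong form~(\ref{eqn:mod_FPK}) follows. The delicate step is the rigorous handling of the $\ud Z$-conditional expectation; a clean alternative route is to derive~(\ref{eqn:mod_FPK}) as the $\sTriangle\to 0$ limit of the continuous-discrete recursion~(\ref{eqn:dyn_part_hyb})--(\ref{eqn:pks_2}) already analyzed in the preceding section.
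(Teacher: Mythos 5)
Your proposal is correct and follows essentially the same route as the paper's own proof: apply It\^o's formula to a test function of $X^i_t$ (with the cross-variation $\ud\langle B^i,Z\rangle=0$ and $\ud\langle Z\rangle=\sigma_W^2\,\ud t$ producing the $\tfrac12\sigma_W^2\v^2$ term), take conditional expectations given $\clZ_t$, interchange the conditional expectation with the $\ud t$- and $\ud Z$-integrals, and integrate by parts. The only cosmetic difference is that the paper isolates the interchange step as a standalone lemma (\Lemma{lem:swap}, stated without proof) resting on the independence of $\sigma(X_0^i,B^i)$ and $\clZ$, whereas you invoke an FKK-type optional-projection argument for the same step; both treatments leave that delicate point at the same level of detail.
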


\subsection{Consistency with the Nonlinear Filter}
The main result of this section is the construction of an
optimal pair $\{u,\v\}$ under the following assumption:

\begin{romannum}
\item[\textbf{\textit{Assumption~A1}}]
The conditional distributions $(p^*, p)$ are $C^2$, with
  $p^*(x,t)>0$  and  $p(x,t)>0$, for all $x\in\Re$, $t>0$.
\qed
\end{romannum}

We henceforth choose  $\{u,\v\}$ as the solution to a certain
E-L BVP based on $p$: the function $\v$ is the solution to 
\begin{equation}
-\frac{\partial}{\partial x}\left( \frac{1}{p(x,t)} \frac{\partial
  }{\partial x} \{ p(x,t)\v(x,t) \}\right) = \frac{1}{\sigma_W^2} h'(x),
\label{eqn:EL_v*}
\end{equation}
with boundary condition~\eqref{e:adV}.
The function~$u(\cdot,t):\Re\to\Re$ is obtained as:
\begin{equation}
u(x,t) = \v(x,t) \left( -\frac{1}{2} (h(x) + \hat{h}_t ) + \frac{1}{2} \sigma_W^2
\v'(x,t) \right),
\label{eqn:u_intermsof_v*}
\end{equation}
where $ \hat{h}_t = \int h(x) p(x,t) \ud x$. We assume moreover
that the control input obtained using
$\{u,\v\}$ is admissible. The particular form of $u$ given
in~\eqref{eqn:u_intermsof_v*} and the BVP~\eqref{eqn:EL_v*} is
motivated by considering the continuous-time limit
of~\eqref{eqn:EL_eqn}, obtained on
letting $ \sTriangle := t_{n+1}- t_{n}$ go to zero; the calculations appear
in \Appendix{apdx:EL_uv}.

Existence and uniqueness of $\{u,\v\}$ is obtained in the
following proposition --- Its proof is given  in
\Appendix{apdx:uniqueness}.
\begin{proposition}
\label{prop:existence_uniqueness_properties_EL}
Consider the BVP~\eqref{eqn:EL_v*},   subject to Assumption~A1.  Then,
\begin{enum}
\item There exists a unique solution $\v$, subject to the
    boundary condition \eqref{e:adV}.

\item The solution satisfies $\v(x,t) \ge 0$ for all $x,t$,
    provided $h'(x)\ge 0$ for all $x$. \qed
\end{enum}
\end{proposition}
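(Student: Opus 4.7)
The plan is to reduce the BVP~\eqref{eqn:EL_v*} to an elementary linear second-order ODE by changing the unknown to $\phi(x,t) := p(x,t)\v(x,t)$. In this variable the equation becomes
$$
-\frac{\partial}{\partial x}\!\left(\frac{1}{p(x,t)}\frac{\partial \phi}{\partial x}\right)=\frac{h'(x)}{\sigma_W^2},
$$
with boundary condition $\phi(\pm\infty,t)=0$ (from~\eqref{e:adV}). Under Assumption~A1 we have $p>0$, so dividing by $p$ is always legitimate. First integration gives $\phi_x/p=-(h(x)-c_1)/\sigma_W^2$, where $c_1$ is a $t$-dependent constant. Multiplying by $p$, integrating from $-\infty$, and using $\phi(-\infty,t)=0$ to kill the second constant, yields
$$
\phi(x,t)=-\frac{1}{\sigma_W^2}\int_{-\infty}^{x}p(y,t)\bigl(h(y)-c_1\bigr)\,\mathrm d y.
$$
The remaining boundary condition at $+\infty$, combined with $\int p\,\mathrm d y=1$, forces $c_1=\hat{h}_t$. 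This produces the candidate solution $\v=\phi/p$, from which existence follows.

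For uniqueness I would observe that the difference $\tilde\phi$ of any two solutions solves the homogeneous problem $-(\tilde\phi_x/p)_x=0$ with $\tilde\phi(\pm\infty,t)=0$. One integration gives $\tilde\phi_x=c\,p$ for a constant $c$; integrating again and using $\tilde\phi(-\infty,t)=0$ gives $\tilde\phi(x,t)=c\int_{-\infty}^{x}p\,\mathrm d y$, and then $\tilde\phi(+\infty,t)=0$ together with $\int p=1$ forces $c=0$. Hence $\tilde\phi\equiv 0$ and the solution $\v$ is unique.

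For part~(2), when $h'\ge 0$ the right-hand side of the $\phi$-equation is non-negative, so $\phi$ satisfies a divergence-form linear ODE with non-negative source and vanishing boundary data at $\pm\infty$. The weak minimum principle for such one-dimensional elliptic operators then implies that $\phi$ attains its minimum on the boundary, giving $\phi\ge 0$ throughout $\Re$, and hence $\v=\phi/p\ge 0$. As a cross-check, the explicit formula $\sigma_W^2\,p(x,t)\,\v(x,t)=\int_{-\infty}^{x}p(y,t)(\hat{h}_t-h(y))\,\mathrm d y$ is readily seen to be non-negative: denoting this integral by $I(x)$, we have $I(\pm\infty)=0$ and $I'(x)=p(x,t)(\hat{h}_t-h(x))$, which is positive to the left of the point where $h(x)=\hat{h}_t$ and negative to the right, so $I$ rises from $0$ to a maximum and then falls back to $0$, remaining non-negative throughout.

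The main obstacles are not structural but accounting: we must verify the required integrability so that $\hat{h}_t$ is finite and the improper integrals in the boundary-at-infinity arguments converge, and we must confirm that the resulting $\v$ satisfies the admissibility hypothesis invoked after~\eqref{eqn:u_intermsof_v*}. Both should follow from Assumption~A1 together with mild growth conditions on $h$ of the same type already implicit in the statement of the variational problem.
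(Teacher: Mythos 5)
Your proposal is correct and follows essentially the same route as the paper's proof in \Appendix{apdx:uniqueness}: integrate the linear ODE twice (equivalently, work with $\phi=p\v$), fix the two constants of integration using the boundary condition \eqref{e:adV} and the normalization $\int p\,\ud y=1$ to arrive at the closed-form solution \eqref{eqn:closed_form_solution_of_BVP}, and invoke the minimum principle for part~2. Your explicit homogeneous-solution uniqueness argument and the direct sign check on $I(x)=\int_{-\infty}^{x}p(y,t)(\hat{h}_t-h(y))\,\ud y$ are just fuller write-ups of steps the paper leaves implicit.
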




The following theorem shows that the two evolution equations
\eqref{eqn:Kushner_eqn} and \eqref{eqn:mod_FPK} are identical.
The proof appears in \Appendix{apdx:consistency_pf}.

\begin{theorem}
\label{thm:kushner}
Consider the two evolution equations for $p$ and $p^*$, defined
according to the solution of the forward
equation~\eqref{eqn:mod_FPK} and the K-S
equation~\eqref{eqn:Kushner_eqn}, respectively.  Suppose that
the control functions $u(x,t)$ and $\v(x,t)$ are obtained
according to~\eqref{eqn:EL_v*} and~\eqref{eqn:u_intermsof_v*},
respectively. Then, provided $p(x,0)=p^*(x,0)$, we have for all
$t\ge 0$,
\[
p(x,t) = p^*(x,t)
\]
\qed
\end{theorem}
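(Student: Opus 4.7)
My plan is to show that, under the specified choice of $u$ and $\v$, the forward equation \eqref{eqn:mod_FPK} for $p$ coincides identically with the Kushner--Stratonovich equation \eqref{eqn:Kushner_eqn} for $p^*$ (with $\hat{h}_t$ interpreted in each case as the integral against the corresponding density). Uniqueness of solutions to the K--S SPDE, together with the assumption $p(\cdot,0)=p^*(\cdot,0)$, then yields $p\equiv p^*$ on $[0,\infty)$.

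The first step is to extract a convenient reformulation of the BVP~\eqref{eqn:EL_v*}. Integrating once in $x$ produces $-\frac{1}{p}\frac{\partial(p\v)}{\partial x} = \frac{1}{\sigma_W^2} h(x) + C(t)$ for some time-dependent constant. The constant is pinned down by integrating against $p(x,t)\,\ud x$ and using the boundary condition~\eqref{e:adV} together with $\int p\,\ud x=1$, giving $C(t) = -\hat{h}_t/\sigma_W^2$ where $\hat{h}_t = \int h(x)\, p(x,t)\,\ud x$. Thus the BVP is equivalent to the pointwise identity
\begin{equation}
-\frac{\partial (p\v)}{\partial x} = \frac{1}{\sigma_W^2}\bigl(h(x)-\hat{h}_t\bigr)\, p(x,t).
\label{eqn:key-identity}
\end{equation}
This immediately shows that the coefficient of $\ud Z_t$ in~\eqref{eqn:mod_FPK} equals $\tfrac{1}{\sigma_W^2}(h-\hat{h}_t)p$, matching the $\ud Z_t$ coefficient in~\eqref{eqn:Kushner_eqn}.

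The main step, and the one I expect to be the principal obstacle, is matching the $\ud t$ coefficient. Substituting $u = \v\bigl(-\tfrac{1}{2}(h+\hat{h}_t) + \tfrac{1}{2}\sigma_W^2\v'\bigr)$ into $-\partial(up)/\partial x$, and expanding $\tfrac{1}{2}\sigma_W^2\partial^2(p\v^2)/\partial x^2 = \tfrac{\sigma_W^2}{2}\partial_x\!\bigl(\v\,\partial_x(p\v) + p\v\v'\bigr)$, the terms involving $p\v\v'$ cancel between the two contributions, leaving
\[
-\frac{\partial(up)}{\partial x} + \frac{\sigma_W^2}{2}\frac{\partial^2(p\v^2)}{\partial x^2} = \frac{1}{2}\frac{\partial((h+\hat{h}_t)p\v)}{\partial x} + \frac{\sigma_W^2}{2}\frac{\partial}{\partial x}\!\Bigl(\v\,\frac{\partial(p\v)}{\partial x}\Bigr).
\]
Applying~\eqref{eqn:key-identity} inside the remaining derivative yields a further cancellation: the sum collapses to $\hat{h}_t\,\partial(p\v)/\partial x$, and one final use of~\eqref{eqn:key-identity} identifies this with $-\tfrac{1}{\sigma_W^2}(h-\hat{h}_t)\hat{h}_t\, p$, which is precisely the drift contribution from the $-\hat{h}_t\,\ud t$ term in~\eqref{eqn:Kushner_eqn}. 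So the $\ud t$ coefficients agree.

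Having verified that both equations \eqref{eqn:mod_FPK} and \eqref{eqn:Kushner_eqn} coincide (with a common self-consistent $\hat{h}_t$), the conclusion follows from uniqueness of the Kushner--Stratonovich SPDE subject to the initial condition $p(\cdot,0)=p^*(\cdot,0)$ and the regularity guaranteed by Assumption A1 and Proposition~\ref{prop:existence_uniqueness_properties_EL}. The bulk of the technical work is thus the algebraic verification of the drift match; the rest is either bookkeeping with the BVP or standard appeal to SPDE uniqueness.
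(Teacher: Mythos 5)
Your proposal is correct and follows essentially the same route as the paper's proof: integrate the BVP once to get $-\partial_x(p\v)=\tfrac{1}{\sigma_W^2}(h-\hat h_t)p$, match the $\ud Z_t$ coefficients directly, and then show by the same cancellation (the paper simplifies $-up$ to $-\tfrac{1}{2}\sigma_W^2\partial_x(p\v^2)+\hat h_t p\v$ before differentiating, which is your computation in a slightly different order) that the drift terms collapse to $-\tfrac{\hat h_t}{\sigma_W^2}(h-\hat h_t)p$. Your explicit remark that $\hat h_t$ must be read self-consistently against each density, and the appeal to uniqueness for the resulting SPDE, is a point the paper leaves implicit but is the right way to close the argument.
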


\begin{remark}
 \Theorem{thm:kushner} is based on the \textit{ideal setting} in which the gain
$\v(X^i_t,t) $ is obtained as a function of the posterior $p=p^*$ for
$X^i_t$.   In practice the algorithm is applied with $p$ replaced by
the empirical distribution of the $N$ particles.  

In this ideal setting, the empirical distribution of the particle system will approximate the posterior distribution $p^{*}(x,t)$ as
$N\rightarrow \infty$.   The convergence is in the weak sense in
general.  To obtain almost sure convergence, it is necessary to obtain
sample path representations of the solution to the stochastic
differential equation for each $i$ (see e.g.~\cite{kun90}).  Under these conditions the solution to the SDE \eqref{eqn:particle_filter} for each $i$ has a functional representation,
\[
X^i_t = F(X^i_0, B^i_{[0,t]}; Z_{[0,t]} ),
\]
where the notation $Z_{[0,t]}$ signifies the entire sample path $\{Z_s
: 0\le s\le t\}$ for a stochastic process $\bfmZ$; $F$ is a continuous functional (in the uniform topology) of the sample paths $ \{B^i_{[0,t]} ,Z_{[0,t]} \}$ along with the initial condition $X^i_0$.
It follows that the empirical distribution has a functional representation,
\[
p^{(N)}(A,t) = \frac{1}{N}\sum_{i=1}^N  \ind\{ F(X^i_0, B^i_{[0,t]}; Z_{[0,t]} ) \in A\}
\]
The sequence $\{(X^i_0, B^i_{[0,t]}) : i=1,...\}$ is i.i.d.\ and
independent of $\bfmZ$. It follows that the summand   $\{ \ind\{ F(X^i_0, B^i_{[0,t]}; Z_{[0,t]} ) : i=1, \dots\}$ is also i.i.d.\ given $Z_{[0,t]}$.   Almost sure convergence follows from the Law of Large Numbers for scalar i.i.d.\ sequences.   

In current research we are considering the more
difficult problem of performance bounds for the approximate
implementations described in \Sec{sec:nonlinear}.
\spm{ok?}
\end{remark}



\begin{remark}
On integrating~\eqref{eqn:EL_v*} once, we obtain an {\em equivalent}
characterization of the E-L BVP:
\begin{equation}
\frac{\partial }{\partial x}(p \v) =
-\frac{1}{\sigma_W^2}(h-\hat{h}_t)p,
\label{eqn:EL_v_first_order}
\end{equation}
now with a single boundary condition
$\lim_{x\rightarrow-\infty} p\v(x,t) =0$.  The resulting gain function
can be readily shown to yield admissible control input under
certain additional technical assumptions on density $p$ and the
function $h$.

Given the scope of this paper, and the fact that the same apriori
bounds apply also to the multivariable case, we defer additional
discussion to a future publication.

\spm{perhaps too much advertsing regarding future work?}
\spm{this seems misplaced:
  In the following, we
provide an admissible control law for the linear Gaussian case.}
\end{remark}

\begin{remark}
Although the methodology and the filter is presented for Gaussian
process and observation noise, the case of non-Gaussian process noise
is easily handled -- simply replace the noise model in the filter with
the appropriate model of the process noise.

\spm{not clear enough.  Is it necessary to keep it?
The assumption on the observation noise is more restrictive.  Indeed,
the filter is no longer expected to be consistent for more general
models of the observation noise.
}

For other types of observation noise, one would modify the conditional
distribution $p_{Y|X}$ in the optimization
problem~\eqref{eqn:optimiz_problem}.  The derivation of filter would
then proceed by consideration of the first variation (see \Appendix{apdx:EL_uv}).


\spm{This is really unclear -- I don't see the value, and it distracts from the previous valuable comment:
The feedback particle filter, in effect, serves to illustrate the
methodology for the most well-studied case of Gaussian observation
noise.}
\end{remark}



\subsection{Example: Linear Model}
\label{sec:linear_case}

It is helpful to consider the feedback particle filter in the following simple linear setting, 
\begin{subequations}
\begin{align}
\ud X_t  &= \alpha \;X_t\ud t + \sigma_B \ud B_t,\label{eqn:dyn_lin}\\
\ud Z_t &= \gamma\; X_t \ud t+\sigma_W \ud W_t,\label{eqn:obs_lin}
\end{align}
\end{subequations}
where $\alpha$, $\gamma$ are real numbers. 
The initial
distribution $p^*(x,0)$ is assumed to be
Gaussian with mean $\mu_0$ and
variance $\Sigma_0$.

The following lemma provides the solution of the gain function
$\v(x,t)$ in the linear Gaussian case.

\begin{lemma}\label{lem_2}
Consider the linear observation equation~\eqref{eqn:obs_lin}.
If  $p(x,t)$
is assumed to be Gaussian with mean $\mu_t$
and variance $\Sigma_t$,
then the solution of E-L BVP~\eqref{eqn:EL_v_intro}
is given by:
\begin{align}
\v(x,t) = \frac{\Sigma_t \gamma}{\sigma_W^2}.
\label{eqn:linsol_v}
\end{align}
\qed
\end{lemma}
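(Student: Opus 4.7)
The plan is to verify the candidate $\v(x,t) = \Sigma_t \gamma/\sigma_W^2$ by direct substitution, exploiting the first-order equivalent form of the E-L BVP given in \eqref{eqn:EL_v_first_order}, namely
\[
\frac{\partial}{\partial x}(p \v) = -\frac{1}{\sigma_W^2}(h-\hat{h}_t)p,
\]
with boundary condition $\lim_{x\to-\infty} p(x,t)\v(x,t) = 0$. This is much more convenient than working with the second-order form~\eqref{eqn:EL_v_intro} directly, since uniqueness from \Proposition{prop:existence_uniqueness_properties_EL} guarantees that once a function satisfying the first-order ODE and the decay boundary conditions is exhibited, we are done.

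First I would specialize the right-hand side to the linear observation model: with $h(x) = \gamma x$, we have $\hat{h}_t = \int \gamma x\, p(x,t)\,\ud x = \gamma \mu_t$, so $h(x) - \hat{h}_t = \gamma(x-\mu_t)$. Next I would use the Gaussian form of $p(x,t)$ to compute
\[
\frac{\partial p}{\partial x}(x,t) = -\frac{x-\mu_t}{\Sigma_t}\, p(x,t).
\]
Since the candidate gain is constant in $x$, the left side reduces to $\v \cdot \partial p/\partial x$, and substitution of $\v = \Sigma_t\gamma/\sigma_W^2$ gives
\[
\frac{\partial}{\partial x}(p \v) = -\frac{\Sigma_t \gamma}{\sigma_W^2}\cdot \frac{x-\mu_t}{\Sigma_t}\, p(x,t) = -\frac{\gamma(x-\mu_t)}{\sigma_W^2}\, p(x,t),
\]
which is exactly the right-hand side.

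To finish, I would check the boundary condition: since $p(x,t)$ is Gaussian and $\v$ is a constant (in $x$), $p(x,t)\v(x,t) \to 0$ as $x \to \pm\infty$, so in particular the required condition at $-\infty$ is satisfied. By \Proposition{prop:existence_uniqueness_properties_EL}(1), the solution of the E-L BVP subject to the boundary conditions is unique, so the candidate is the solution. I do not expect any real obstacle; the only subtle point is to note that the Gaussian assumption is what makes the gain turn out to be constant in $x$, so that the solution coincides with the Kalman gain $\Sigma_t\gamma/\sigma_W^2$ familiar from the linear theory.
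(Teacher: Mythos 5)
Your proof is correct and follows essentially the same route as the paper, which simply verifies \eqref{eqn:linsol_v} by direct substitution into the E-L equation with $p$ Gaussian; your use of the once-integrated form \eqref{eqn:EL_v_first_order} together with the uniqueness statement of \Proposition{prop:existence_uniqueness_properties_EL} is an equivalent (and slightly more explicit) way of carrying out the same verification.
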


The formula~\eqref{eqn:linsol_v} is verified by direct
substitution in the ODE~\eqref{eqn:EL_v_intro} where the
distribution $p$ is Gaussian.

The optimal control yields the following form for the particle
filter in this linear Gaussian model:
\begin{equation}
\begin{aligned}
\ud X^i_t= \alpha \; X^i_t \ud t +
\sigma_B \ud B^i_t + \frac{ \Sigma_t \gamma}{\sigma^2_W} \left( \ud Z_t - \gamma \frac{X^i_t + \mu_t}{2} \ud t \right).
\end{aligned}
\label{eqn:particle_filter_lin}
\end{equation}

Now we show that $p=p^*$ in this case. That is, the conditional
distributions of $\bfmX$ and $\bfmX^i$ coincide, and are
defined by the well-known dynamic equations that characterize
the mean and the variance of the continuous-time Kalman filter.
The proof appears in \Appendix{pf_thm1}.

\begin{theorem}
\label{thm_lin} Consider the linear Gaussian filtering problem
defined by the state-observation equations
(\ref{eqn:dyn_lin},\ref{eqn:obs_lin}).  In this case the
posterior distributions of $\bfmX$ and $\bfmX^i$  are Gaussian,
whose conditional mean and covariance are given by the
respective SDE and the ODE,
\begin{align}
 \ud \mu_t  &= \alpha\mu_t \ud t + \frac{\Sigma_t \gamma }{\sigma^2_W} \Bigl(\ud Z_t-\gamma\mu_t \ud t \Bigr)
 \label{eqn:mod1}
\\[.1cm]
\ddt \Sigma_t
&= 2\alpha \Sigma_t + \sigma_B ^2 -\frac{(\gamma)^2 \Sigma^2_t}{\sigma^2_W}
\label{eqn:mod2}
\end{align}
\qed
\end{theorem}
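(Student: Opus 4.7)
The plan is to verify the theorem by direct analysis of the particle SDE \eqref{eqn:particle_filter_lin}. The key observation is that \Lemma{lem_2} produces a gain function $\v(x,t) = \Sigma_t \gamma/\sigma_W^2$ that is spatially constant and $\clZ_t$-adapted, so the right-hand side of \eqref{eqn:particle_filter_lin} is affine in $X^i_t$ with $\clZ_t$-measurable coefficients, and the initial condition $X^i_0$ is Gaussian by hypothesis. By the standard theory of linear SDE (applied conditionally on $\clZ_t$, using that $\bfmath{B}^i$ is independent of $\bfmZ$), the conditional law $p(\cdot,t)$ is Gaussian for every $t \ge 0$. It then suffices to derive the dynamics of its conditional mean $\mu_t := \Expect[X^i_t \mid \clZ_t]$ and conditional variance $\Sigma_t := \Expect[(X^i_t-\mu_t)^2 \mid \clZ_t]$, and show they coincide with \eqref{eqn:mod1}--\eqref{eqn:mod2}.

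I would first derive the mean equation by taking the conditional expectation of \eqref{eqn:particle_filter_lin} with respect to $\clZ_t$: the It\^o integral against $\ud B^i_t$ contributes zero by independence, and $\Expect[(X^i_t+\mu_t)/2\mid \clZ_t]=\mu_t$, yielding exactly \eqref{eqn:mod1}. Subtracting \eqref{eqn:mod1} from \eqref{eqn:particle_filter_lin} makes the $\ud Z_t$ contributions cancel identically, leaving
\[
\ud(X^i_t - \mu_t) = \Bigl(\alpha - \frac{\gamma^2 \Sigma_t}{2\sigma_W^2}\Bigr)(X^i_t - \mu_t)\ud t + \sigma_B \ud B^i_t.
\]
Applying It\^o's formula to $(X^i_t-\mu_t)^2$ (the quadratic variation is $\sigma_B^2\ud t$) and then taking $\Expect[\,\cdot\,\mid \clZ_t]$ so that the $\ud B^i_t$ martingale drops out, I obtain
\[
\ddt \Sigma_t = \Bigl(2\alpha - \frac{\gamma^2 \Sigma_t}{\sigma_W^2}\Bigr)\Sigma_t + \sigma_B^2,
\]
which is \eqref{eqn:mod2}. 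Notably this evolution equation for $\Sigma_t$ is an ODE driven only by $\Sigma_t$ itself, so $\Sigma_t$ is in fact deterministic, confirming that its use as a $\clZ_t$-measurable coefficient in the gain \eqref{eqn:linsol_v} is legitimate.

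To conclude, I would invoke \Theorem{thm:kushner}: since $(u,\v)$ constructed from \eqref{eqn:linsol_v} and \eqref{eqn:u_intermsof_v*} solves the E--L BVP \eqref{eqn:EL_v*} and $p(\cdot,0)=p^*(\cdot,0)$ by assumption, the posterior $p^*(\cdot,t)$ of $X_t$ agrees with $p(\cdot,t)$ for all $t\ge 0$, and hence is Gaussian with the same mean and variance. The derived moment equations \eqref{eqn:mod1}--\eqref{eqn:mod2} then coincide with the classical continuous-time Kalman filter, as claimed. The main point requiring care, and the only real obstacle, is the self-referential nature of $\Sigma_t$: the coefficient $\Sigma_t$ that appears inside the particle dynamics is defined in terms of the very variance we are trying to derive. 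This circularity is resolved by the observation above that $\Sigma_t$ satisfies an autonomous Riccati-type ODE independent of $\bfmZ$, so the system closes consistently starting from the prescribed initial variance $\Sigma_0$.
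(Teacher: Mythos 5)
Your proof is correct, but it follows a genuinely different route from the paper's. The paper's argument (in \Appendix{pf_thm1}) is a density-level verification: it posits the Gaussian ansatz \eqref{eqn:l21}, applies It\^{o}'s formula to $p(x,t)$ viewed as a function of the processes $(\mu_t,\Sigma_t)$, substitutes into the forward equation \eqref{eqn:mod_FPK}, and collects powers of $x$ to obtain $Ax^2+Bx=0$; setting $A=B=0$ yields \eqref{eqn:mod1}--\eqref{eqn:mod2}. You instead work at the level of the particle SDE: since the gain \eqref{eqn:linsol_v} is spatially constant, \eqref{eqn:particle_filter_lin} is conditionally an affine SDE, so Gaussianity of $p(\cdot,t)$ comes for free from linear-SDE theory rather than being assumed, and the moment equations follow from taking conditional expectations and applying It\^{o} to $(X^i_t-\mu_t)^2$ after the $\ud Z_t$ terms cancel. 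Your computation checks out ($-\gamma(x+\mu_t)/2+\gamma\mu_t=-\gamma(x-\mu_t)/2$ gives the drift $\alpha-\gamma^2\Sigma_t/(2\sigma_W^2)$, and the quadratic variation is $\sigma_B^2\ud t$ precisely because the difference process carries no $\ud Z_t$ term), and your closing observations --- that $\Sigma_t$ obeys an autonomous Riccati ODE, which breaks the apparent circularity of a mean-field coefficient, and that \Theorem{thm:kushner} then identifies $p$ with $p^*$ --- are exactly the right way to tie the statement together. Two small points of rigor: the interchange of conditional expectation with the $\ud B^i_t$ and $\ud Z_t$ integrals is what \Lemma{lem:swap} is for and should be cited rather than attributed only to independence; and the same circularity you flag for $\Sigma_t$ is present for $\mu_t$, resolved analogously because \eqref{eqn:mod1} is a well-posed linear SDE driven by $\bfmZ$. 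On balance, your route makes the Gaussianity claim more transparent, while the paper's route stays consistent with its forward-equation machinery and verifies directly that the Gaussian family is invariant under \eqref{eqn:mod_FPK}.
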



Notice that the particle system~\eqref{eqn:particle_filter_lin} is
not practical since it requires computation of the conditional
mean and variance $\{\mu_t, \Sigma_t\}$. If we are to compute
these quantities, then there is no reason to run a particle
filter!

In practice $\{\mu_t, \Sigma_t\}$ are approximated as sample means and sample covariances from the ensemble
$\{X^i_t\}_{i=1}^N$:
\begin{equation}
\begin{aligned}
\mu_t & \approx \mu_t^{(N)} := \frac{1}{N} \sum_{i=1}^N X^i_t,\\
\Sigma_t & \approx \Sigma_t^{(N)} := \frac{1}{N-1} \sum_{i=1}^N (X^i_t - \mu_t^{(N)})^2.
\end{aligned}
\label{e:mut_sigmat_approx}
\end{equation}
The resulting equation
\eqref{eqn:particle_filter_lin} for the $i^{\text{th}}$
particle is given by
\begin{equation}
\begin{aligned}
\ud X^i_t= \alpha \; X^i_t \ud t +
\sigma_B \ud B^i_t + \frac{\Sigma_t^{(N)}\gamma}{\sigma^2_W} \left( \ud Z_t - \gamma \frac{X^i_t + \mu_t^{(N)}}{2} \ud t \right).
\end{aligned}
\label{eqn:particle_filter_lin_implement}
\end{equation}
It is very similar to the mean-field ``synchronization-type'' control laws and oblivious equilibria
constructions as in \cite{huacaimal07,YinMehtaMeynShanbhag12TAC}.
The model~\eqref{eqn:particle_filter_lin} represents the mean-field
approximation obtained by letting $N\rightarrow \infty$.


\subsection{Feedback Particle Filter for the Multivariable Model}
\label{sec:FPF_multi-variable}

Consider the model 
\eqref{eqn:Signal_Process}-\eqref{eqn:Obs_Process} in which the state
$X_t$ is $d$-dimensional,  with $d\ge 2$, so that $a(\cdot)$ is   a
vector-field on $\Re^d$. For  ease of presentation $\sigma_B$ is
assumed to be scalar, and the observation process $Z_t \in \Re$
real-valued.  

To aid comparison with Crisan and Xiong's work, we express the feedback particle filter in its Stratonovich form:
\begin{equation}
\ud X^i_t = a ( X^i_t) \ud t + \sigma_B \ud B^i_t  + \v(X^i_t,t) \circ
\ud \Inov^i_t 
\label{eqn:particle_filter_nonlin_multi}
\end{equation}
where the innovation error is as before,
\begin{equation}
\ud \Inov^i_t \eqdef \ud Z_t - \frac{1}{2}    (h(X^i_t) + \hat{h}_t) \ud t,
\label{e:in_multi}
\end{equation}
and the gain function $\v(x,t) = \left(\v_1,\v_2,...,\v_d\right)^T$ is now vector-valued.
It is given by the solution of a BVP,
the multivariable counterpart of
\eqref{eqn:EL_v_first_order}:
\begin{equation}
\nabla \cdot (p \v) = - \frac{1}{\sigma_W^2} (h-\hat{h}_t)p,
\label{e:bvp_divergence_multi}
\end{equation}
where $\nabla \cdot$ denotes the divergence operator.

It is straightforward to prove consistency by repeating the steps in
the proof of \Theorem{thm:kushner}, now with the Kolmogorov forward
operator:
\begin{equation}
\ud p = \clL^\dagger p \ud t  - \nabla \cdot \left( \v p
\right) \ud Z_t  - \nabla \cdot \left( u p
\right) \ud t + \frac{1}{2} \sigma_W^2 \sum_{i,j=1}^d \frac{\partial^2 [(\v \v^T)_{ij} p]}{\partial x_i \partial x_j} \ud t
\label{eqn:mod_FPK_multi}
\end{equation}
where $ \clL^\dagger p = - \nabla \cdot (p a) + \frac{1}{2} \sigma_B^2
\Lap p$, $\Lap$ is the Laplacian, and $u$ is the multivariable
counterpart of~\eqref{eqn:u_intermsof_v*}:
\[
	u = - \v(x,t) \frac{h(x) + \hat{h}_t}{2} + \w(x,t),
\]
where $\w = \left(\w_1,\w_2,...,\w_d\right)^T$ is the Wong-Zakai correction term:
\begin{equation*}
\w_l(x,t) := \frac{1}{2} \sigma_W^2 \sum_{k=1}^d \v_{k}(x,t)  \frac{\partial \v_{l}}{\partial x_{k}}(x,t).
\label{eqn:wong_term_intro}
\end{equation*}

As with the scalar case, the multivariable feedback particle filter
requires solution of a BVP~\eqref{e:bvp_divergence_multi} at each time
step.  

\spm{I really did not like the remaining text since we are pretending that C+X's representation makes sense!
I've   changed the tone}

Following the work of Crisan and Xiong~\cite{CriXiong09}, we might assume the following representation in an attempt to solve~\eqref{e:bvp_divergence_multi},
\begin{equation}
p \v = \nabla \phi.
\label{eqn:pv_is_gradient}
\end{equation}
where $\phi$ is assumed to be sufficiently smooth.  Substituting~\eqref{eqn:pv_is_gradient} in~\eqref{e:bvp_divergence_multi} yields the Poisson equation,
\begin{equation}
\Lap \phi = - \frac{1}{\sigma_W^2} (h-\hat{h}_t)p\, .
\label{eqn:Poisson}
\end{equation}

A solution to Poisson's equation with $d\ge 2$ can be expressed in terms of 
Green's function: 
\[
G(r) = \begin{cases}
\frac{1}{2\pi} \ln(r) & \text{ for $d=2$;}
\\
\frac{1}{d(2-d)\omega_d}r^{2-d} &\text{for $d>2$ ,}
\end{cases}
\]
where $\omega_d$ is
the volume of the unit ball in $\Re^d$. 
A solution to \eqref{eqn:Poisson} is then given by,
\begin{equation*}
\phi(x) = - \frac{1}{\sigma_W^2} \int_{\Re^d} G(|y-x|) (h(y)-\hat{h}_t) p(y,t) \ud y,
\end{equation*}
where $|y-x| := \left( \sum_{j=1}^d (y_j-x_j)^2
\right)^{\frac{1}{2}}$ is the Euclidean distance.

On taking the gradient and
using~\eqref{eqn:pv_is_gradient}, one obtains an explicit
formula for the gain function:
\begin{equation}
\v(x,t) = \frac{1}{\sigma_W^2} \frac{\Gamma(p,h)(x)}{p(x,t)} =: \v_g(x,t),
\label{e:integral_form_of_solution_multi}
\end{equation}
where $\Gamma(p,g)(x) := \frac{1}{d\omega_d} \int \frac{y-x}{|y-x|^{d}} (g(y)-\hat{g})
p(y) \ud y$.

While this leads to a solution to~\eqref{e:bvp_divergence_multi}, it may not lead to an admissible control law.  This difficulty arises in the prior work of Crisan and Xiong.

\subsection{Comparison with Crisan and Xiong's Filter}
\label{sec:comparison}

In~\cite{CriXiong09} and in Sec.~4 of~\cite{Xiong}, a particle filter of the following form is presented:
\begin{equation}
\ud {X}^i_t = {a}({X}^i_t) \ud t +   \sigma_B \ud {B}^i_t +
\v_g({X}^i_t,t) \Bigl( \frac{\ud}{\ud t}\tilde{I}_t \Bigr)
\ud t,
\label{filter_Crisan_1}
\end{equation}
where $\tilde{I}_t$ is a
certain smooth approximation obtained from the standard form of the innovation error
$ I_t := Z_t - \int_0^t \hat{h}_t \ud t $.  A consistency result is described for
this filter.

We make the following comparisons:
\begin{enum}
\item Without taking a smooth approximation, the filter~\eqref{filter_Crisan_1} is formally equivalent to the following SDE expressed here in its Stratonovich form:
\begin{equation}
\ud {X}^i_t = {a}({X}^i_t) \ud t +   \sigma_B \ud {B}^i_t +
\v_g({X}^i_t,t) \circ \Bigl(\ud Z_t - \hat{h}_t \ud t\Bigr).
\label{filter_Crisan_1_SDE}
\end{equation}
In this case, using~\eqref{eqn:mod_FPK_multi}, it is straightforward
to show that the consistency result 
\textit{does not hold}.  
\spm{NOTE ITALIC!!}
In particular,
there is an extra second order term that is not present in the K-S
equation for evolution of the true posterior $p^*(x,t)$.

\item The feedback particle filter introduced in this paper
does not require a smooth approximation and yet achieves
  consistency. The key breakthrough is the modified definition of the
  innovation error (compare~\eqref{filter_Crisan_1_SDE} with~\eqref{eqn:particle_filter_nonlin_multi}).  Note that the innovation
  error~\eqref{e:in_multi} is not assumed apriori but comes about via
  analysis of the variational problem.  This is one utility
of introducing the variational formulation.  Once the feedback particle filter
  has been derived, it is straightforward to prove consistency (see
  the Proof of \Theorem{thm:kushner}). 
  
  \spm{No value here: And
  the proof in turn represents a straightforward derivation of the
  feedback particle filter in the multivariable case.}

\item \spm{I don't get this lead-in: For the form described in this section, }
The computational
  overhead for the feedback particle filter and the filter of Crisan and Xiong are equal.  
  Both require the approximation of the
  integral~\eqref{e:integral_form_of_solution_multi}, and division by (a
  suitable regularized approximation of) $p(x,t)$.  Numerically, the Poisson equation
  formulation~\eqref{eqn:Poisson} of the E-L  BVP~\eqref{e:bvp_divergence_multi} is convenient.  There
  exist efficient numerical algorithms to approximate the
  integral solution~\eqref{e:integral_form_of_solution_multi} for a system of
  $N$ particles in arbitrary dimension; cf.,~\cite{Greengard:1987}.
\end{enum}


However, while appealing, the function $\v_g$ is {\em not} the correct form of
the gain function in the multivariable case,  even for linear models:
It is straightforward to verify that the
Kalman gain is a solution of the boundary value
problem~\eqref{e:bvp_divergence_multi}.  Using the Kalman gain for the
gain function in~\eqref{eqn:particle_filter_nonlin_multi} yields the feedback particle filter for the
multivariable linear Gaussian case.  The filter is a
straightforward extension of~\eqref{eqn:particle_filter_lin_implement}
in \Sec{sec:linear_case}.

However, the Kalman gain solution is not of the form~\eqref{eqn:pv_is_gradient}.
Thus, the integral solution~\eqref{e:integral_form_of_solution_multi} does {\em not} equal the Kalman gain in the linear Gaussian case (for $d\ge 2$). 

Moreover,   the gradient form solution is unbounded:
$|\v_g(x,t)|\rightarrow\infty$ as $|x|\rightarrow\infty$, and 
		${\sf E}[|\v_g|] = {\sf E}[|\v_g|^2] = \infty$.  
A proof is given in
\Appendix{appdx:sol_multi}.

It follows that the control input
obtained using $\v_g$ is not admissible, and hence the Kolmogorov
forward operator is no longer valid.  Filter implementations using
$\v_g$ suffer from numerical issues on account of large unbounded
gains.  In contrast, the feedback particle filter using Kalman gain
works both in theory and in practice.

\spm{I don't think this adds much here -- we say this in so many places:
It is also worthwhile to note that {\em both} the gain function and
the innovation error are important.  For example, even if one were to
replace $\v_g$ by the Kalman gain in~\eqref{filter_Crisan_1_SDE}, the
resulting filter will still not be exact in the linear Gaussian
case.  For the filter to be exact, one requires the modified form of the
innovation error as given in formula~\eqref{e:in_multi}.}

The choice of gain function in the multivariable case requires careful
consideration of the uniqueness of the solutions of the BVP:
The  solution of \eqref{e:bvp_divergence_multi} is not unique, even though uniqueness holds when 
$p\v$ is assumed to be of a gradient-form.  \spm{is it unique?}
\spm{huh??
\\
It is of
interest to obtain gain function solution(s) that yield admissible
control input.}
\spm{been said:
Additional details on the multivariable
feedback particle filter are planned for future publication.
}

Before closing this section, we note that 
\cite[Proposition~2.4]{CriXiong09} concerns another filter 
that does not rely on smooth approximation,
\[
\ud {X}^i_t = {a}({X}^i_t) \ud t +   \sigma_B \ud {B}^i_t +
\v_g({X}^i_t,t) \circ \ud Z_t - \frac{1}{\sigma^2_W} \frac{\Gamma(p,\frac{1}{2}|h|^2)(X_t^i)}{p(X_t^i,t)} \ud t.
\]
Our calculations indicate that consistency is also an issue  for this
filter.  The issue with $\v_g$ also applies to this filter.   A more
complete comparison needs further investigation.


\section{Synthesis of the Gain Function}
\label{sec:nonlinear}

Implementation of the nonlinear
filter~\eqref{eqn:particle_filter_nonlin_intro} requires
solution of the E-L BVP~\eqref{eqn:EL_v_intro} to obtain the
gain function $\v(x,t)$ for each fixed $t$.


\subsection{Direct Numerical Approximation of the BVP solution}

The explicit closed-form
formula~\eqref{eqn:closed_form_solution_of_BVP} for the solution of the
BVP~\eqref{eqn:EL_v_intro} can be used to construct a direct numerical
approximation of the solution.
Using~\eqref{eqn:closed_form_solution_of_BVP}, we have
\begin{equation*}
\v(x,t) = \frac{1}{p(x,t)} \frac{1}{\sigma_W^2} \int_{-\infty}^x
(\hat{h}_t - h(y)) p(y,t) \ud y.
\end{equation*}

The approximation involves three steps:
\begin{enum}
\item Approximation of $\hat{h}_t$ by using a sample mean:
\begin{equation*}
\hat{h}_t \approx \frac{1}{N} \sum_{j=1}^N h(X_t^j) =: \hat{h}_t^{(N)}.
\end{equation*}
\item Approximation of the integrand:
\begin{equation*}
(\hat{h}_t - h(y)) p(y,t) \approx \frac{1}{N} \sum_{j=1}^N
(\hat{h}_t^{(N)} - h(X_t^j)) \delta(y-X_t^j),
\end{equation*}
where $\delta(\cdot)$ is the Dirac delta function.
\item Approximation of the density $p(x,t)$ in the denominator,
  e.g., as a sum of Gaussian:
\begin{equation}
p(x,t) \approx  \frac{1}{N} \sum_{j=1}^N q^j_t(x) =: \tilde{p}(x,t),
\label{eqn:sum_of_Gaussian_for_approx_numeric}
\end{equation}
where $q^j_t(x) = q(x;X^j_t,\varepsilon)= \frac{1}{\sqrt{2\pi\varepsilon}}
\exp\left(-\frac{1}{2\varepsilon}(x-X^j_t)^2\right)$.  The appropriate value of
$\varepsilon$ depends upon the problem.  As a function of
$N$, $\varepsilon$ can be made smaller as $N$ grows; As $N\rightarrow \infty$, $\varepsilon \rightarrow 0$.
\end{enum}

This yields the following numerical approximation of the gain
function:
\begin{equation}
\v(x,t) = \frac{1}{\tilde{p}(x,t)} \frac{1}{\sigma_W^2}   \frac{1}{N} \sum_{j=1}^N (\hat{h}_t^{(N)} - h(X^j_t)) H(x-X^j_t),
\label{eqn:soln_w_sum_of_delta}
\end{equation}
where $H(\cdot)$ is the Heaviside function.

Note that the gain function needs to be evaluated only at the particle
locations $X^i_t$.  An efficient $O(N^2)$ algorithm is easily
constructed to do the same:
\begin{eqnarray}
\v(X^i_t,t) &=& \frac{1}{\tilde{p}(X^i_t,t)} \frac{1}{\sigma_W^2}
  \frac{1}{N} \left( \sum_{j: X^j_t<X^i_t} (\hat{h}_t^{(N)} - h(X^j_t)) +
\frac{1}{2} (\hat{h}_t^{(N)} - h(X^i_t)) \right)\nonumber\\
\v'(X^i_t,t) &=& \frac{1}{\sigma_W^2}(\hat{h}_t^{(N)} - h(X^i_t)) - \tilde{b}(X^i_t) \v(X^i_t,t),
\label{eqn:soln_w_sum_of_delta_at_particles}
\end{eqnarray}
where $\tilde{b}(x):=\frac{\partial}{\partial x}(\ln
\tilde{p})(x,t)$.  For $\tilde{p}$ defined using
the sum of Gaussian approximation~\eqref{eqn:sum_of_Gaussian_for_approx_numeric}, a
closed-form formula for $\tilde{b}(x)$ is easily obtained.

 \spm{moved consistency discussion}

\spm{Summary of the?  Haven't we summarized enough?  Is this section heading ok?
\\
And, where did "DNS" come from???}

\def\Algo#1{Alg.~\ref{#1}}

\subsection{Algorithm}
\label{sec:Alg}

For implementation purposes, we use the Stratonovich form of the
filter (see~\eqref{eqn:FPF_Strato}) together with an Euler
discretization.  The resulting discrete-time algorithm appears in Algorithm~1. 
At each time step, the algorithm requires approximation of the gain
function.  A DNS-based algorithm for the same is summarized in Algorithm~2.

\spm{I had trouble with the algorithm citation -- is it ok now?}

In practice, one can use a less computationally intensive algorithm to
approximate the gain function.  An algorithm based on sum-of-Gaussian approximation of density appears in our conference paper~\cite{YangMehtaMeyn_cdc11}.  In
the application example presented in \Sec{sub2}, the gain function is approximated by using Fourier series.

\begin{algorithm}
\caption{Implementation of feedback particle filter}
\begin{algorithmic}[1]
\STATE{\bf Initialization}
\FOR{$i:=1$ to $N$}
\STATE Sample $X_0^{i}$ from $p(x,0)$
\ENDFOR
\STATE Assign value $t:=0$
\end{algorithmic}

\begin{algorithmic}[1]
\STATE{\bf Iteration}  $\;$  [from $t$ to $t+\sTriangle t$]
\STATE Calculate $\hat{h}_t^{(N)} := \frac{1}{N}\sum_{i=1}^N h(X_t^{i})$
\FOR{$i:=1$ to $N$}
\STATE Generate a sample, $\sTriangle V$, from $N(0,1)$
\STATE Calculate $\sTriangle I_t^{i} := \sTriangle Z_t - \frac{1}{2} \left( h(X_t^i) + \hat{h}^{(N)}_t\right)\sTriangle t$
\STATE Calculate the gain function $\v(X_t^i,t)$ (e.g., by using \Algo{alg:Gain-DNS})  
\spm{removed " other algorithm"}
\STATE $X_{t+\sTriangle t}^{i} := X_t^{i} + a(X_t^{i}) \sTriangle t + \sigma_B\sqrt{\sTriangle t} \sTriangle V+ \v(X_t^i,t) \sTriangle I_t^{i}$
\spm{I added the sigma term - ok?}
\ENDFOR
\STATE $t :=  t+\sTriangle t$
\spm{not $p^*$}
\end{algorithmic}

\label{alg:discrete-FPF}
\end{algorithm}

\begin{algorithm}
\caption{Synthesis of gain function $\v(x,t)$}

\begin{algorithmic}[1]
\STATE Calculate $\hat{h}_t \approx \hat{h}_t^{(N)};$
\STATE Approximate $p(x,t)$ as a sum of Gaussian:
\begin{equation}
p(x,t) \approx \tilde{p}(x,t) := \frac{1}{N} \sum_{j=1}^N q_t^j(x),\nonumber
\end{equation}
where $q_t^j (x) =\frac{1}{\sqrt{2\pi\varepsilon}} \exp\left(-\frac{(x-X_t^j)^2}{2\varepsilon}\right)$. 
\STATE Calculate the gain function
\begin{equation}
\v(x,t) := \frac{1}{\tilde{p}(x,t)}\frac{1}{\sigma_W^2} \frac{1}{N} \sum_{j=1}^N \left(\hat{h}_t^{(N)} - h(X_t^j)\right) H(x-X_t^j),\nonumber
\end{equation}
where $H(\cdot)$ is the Heaviside function.
\end{algorithmic}

\label{alg:Gain-DNS}

\end{algorithm}

\spm{why are these further remarks squeezed in here?}

\subsection{Further Remarks on the BVP}

Recall that the solution of the nonlinear filtering problem is given
by the Kushner-Stratonovich nonlinear evolution PDE.  The feedback
particle filter instead requires, at each time $t$, a solution of the
linear BVP~\eqref{e:bvp_divergence_multi} to obtain the gain function $\v$:
\begin{equation*}
\nabla \cdot (p \v) = - \frac{1}{\sigma_W^2} (h-\hat{h}_t)p.
\end{equation*}

We make the following remarks:
\begin{enum}
\item There are close parallels between the proposed algorithm and the
  vortex element method (VEM) developed by Chorin and others for solution of the
    Navier-Stokes evolution PDE; cf.,~\cite{Chorin73,Leonard80}.
    In VEM, as in the feedback particle filter, one obtains the
    solution of a nonlinear evolution PDE by flowing a large number of
    particles.  The vector-field for the particles is obtained by
    solving a linear BVP at each time.

Algorithms based on VEM are popular in the large Reynolds number
regime when the domain is not too complicated.  The latter requirement
is necessary to obtain solution of the linear BVP in tractable
fashion~\cite{Greengard:1987}.

\item One may ask what is the benefit, in terms of accuracy and
  computational cost, of the feedback particle filter-based solution
  when compared to a direct solution of the nonlinear PDE (Kushner-Stratonovich equation) or the linear PDE (Zakai equation)?

The key point, we believe, is robustness on account of the feedback
control structure.  Specifically, the self-correcting property of the
feedback provides robustness, allowing one to tolerate a degree of
uncertainty inherent in any model or approximation scheme.  This is
expected to yield accurate solutions in a computationally efficient
manner.
A complete answer will require further analysis, and as such
reflects an important future direction.

\item
The biggest computational cost of our approach is the need to solve the BVP at
each time-step, that additionally requires one to approximate the
density.  We are encouraged however by the extensive set of
tools in feedback
control: after all, one rarely needs to solve the HJB equations in
closed-form to obtain a reasonable feedback control law. 
Moreover, there
are many approaches in nonlinear and adaptive control to both
approximate control laws as well as learn/adapt these in online
fashion; cf.,~\cite{bertsi96a}.

\spm{The Daum citation below should be moved up!
It does not belong right after the neuro DP citation.
}

\end{enum}

\section{Numerics}
\label{sec:numerics}

\begin{figure*}[t]
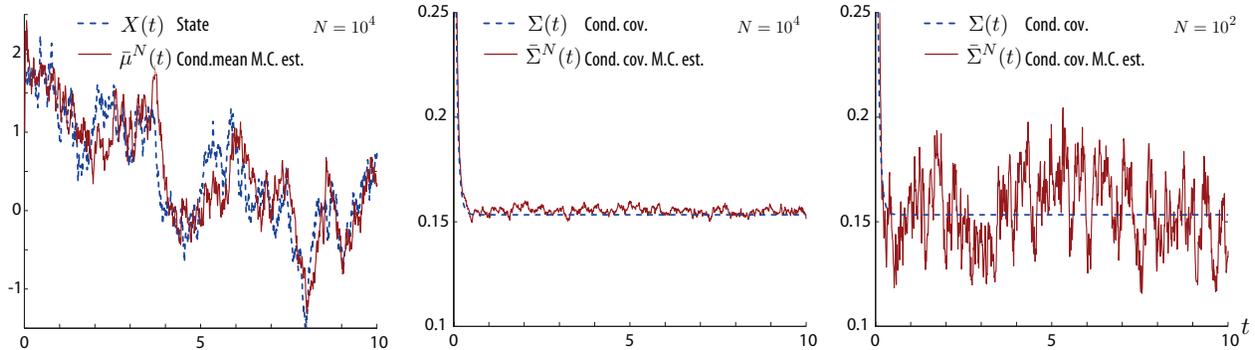

\begin{center}
\Ebox{1}{LIN_XPP2}
\end{center}
\vspace{-0.2in}
\caption{(a) Comparison of the true state $\{X_t\}$ and the
conditional mean $\{\bar{\mu}^{(N)}_t\}$. (b) and (c) Plots of
estimated conditional covariance with $N=10,000$ and $N=100$
particles, respectively.  For comparison, the true conditional
covariance obtained using Kalman filtering equations is also
shown.}
						\vspace{-.25cm}
\label{fig_lin}
\end{figure*}

\subsection{Linear Gaussian Case}

Consider the linear system:
\begin{subequations}
\begin{align}
\ud X_t  &= \alpha \;X_t\ud t + \ud B_t,\label{eqn:dyn_lin_a}\\
\ud Z_t &= \gamma\; X_t \ud t+\sigma_W \ud W_t, \qquad X_0\sim N(1,1)\,,\label{eqn:obs_lin_a}
\end{align}
\end{subequations}
where $\{B_t\},\{W_t\}$ are mutually independent standard
Wiener process, and parameters $\alpha=-0.5$, $\gamma=3$ and
$\sigma_W=0.5$. 
\spm{problem with the figure - the mean of $X_0$ does not appear to be one!}

Each of the $N$ particles is described by the linear SDE, 
\begin{equation}
\ud X^i_t = \alpha\;X^i_t \ud t +  \ud B^i_t +
\frac{\gamma \;\bar{\Sigma}^{(N)}_t}{\sigma_W^2}[\ud Z_t - \gamma\frac{X^i_t
  +\bar{\mu}_t^{(N)}}{2}\ud t]\, ,
\label{eqn:pf_lgeg}
\end{equation}
where  $\{B^i_t\}$ are mutually independent standard Wiener
process;  the particle system is initialized by drawing initial
conditions $\{X^i_0\}_{i=1}^N$ from the distribution
$\textbf{\emph{N}}(1,1)$, and the parameter values are chosen
according to the model.

In the simulation discussed next, the
mean $\bar{\mu}_t^{(N)}$ and the variance $\bar{\Sigma}_t^{(N)}$
are obtained from the ensemble $\{X^i_t\}_{i=1}^N$
according to~\eqref{e:mut_sigmat_approx}.

\Fig{fig_lin} summarizes some of the results of the
numerical experiments: Part (a) depicts a sample path of the
state $\{X_t\}$ and the mean $\{\bar{\mu}^{(N)}_t\}$ obtained
using a particle filter with $N=10,000$ particles. Part (b)
provides a comparison between the estimated variance
$\bar{\Sigma}^{(N)}_t$ and the true error variance $\Sigma_t$
that one would obtain by using the Kalman filtering equations.
The accuracy of the results is sensitive to the number of
particles. For example, part (c) of the figure provides a
comparison of the variance with $N=100$ particles.

\emph{Comparison with the bootstrap filter}: We next provide a performance
comparison between the feedback particle filter and the
bootstrap particle filter for the linear
problem~(\ref{eqn:dyn_lin_a}, \ref{eqn:obs_lin_a}) in  regard 
to both error and running time.

For the linear filtering problem, the optimal solution is given by the
Kalman filter.  We use this solution to define the relative mean-squared error:
\begin{equation}
mse = \frac{1}{T}\int_0^T \left(\frac{\Sigma_t^{(N)} - \Sigma_t}{\Sigma_t}\right)^2 \ud t,
\end{equation}
where $\Sigma_t$ is the error covariance
using the Kalman filter, and $\Sigma^{(N)}_t$
is its approximation using the particle filter.

\Fig{fig:fig_comp}(a) depicts a comparison between {\em mse}
obtained using the feedback particle
filter~\eqref{eqn:pf_lgeg} and the bootstrap filter.  The
latter implementation is based on an algorithm taken from Ch.~9
of~\cite{baincrisan07}.  For simulation purposes, we used a range of
values of $\alpha\in\{-0.5,0,0.5\}$, $\gamma=3$, $\sigma_B=1$, $\sigma_W=0.5$,
$\sTriangle t=0.01$, and $T = 50$.  
The plot is generated using simulations with $N=20, 50, 100, 200, 500, 1000$
particles.
\spm{I have to clean up these figures!}

\begin{figure}
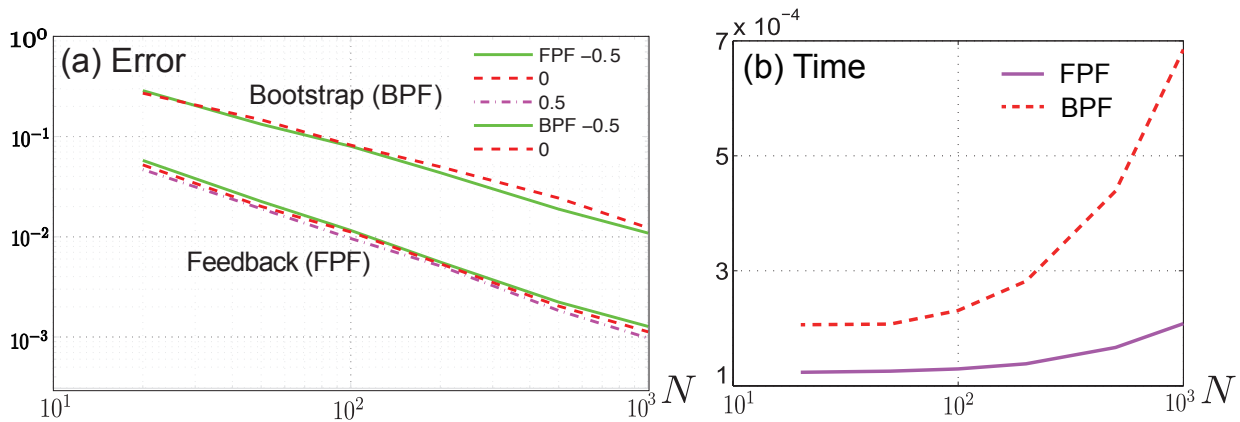

\begin{center}
\Ebox{}{linear_perf_comp}
\end{center}
\caption{Comparison of (a) the {\em mse}, (b) the computational time using feedback particle filter
  and the bootstrap particle filter.}
						\vspace{-.25cm}
\label{fig:fig_comp}
\end{figure}

\spm{I do not understand this paragraph. Are you saying we don't blow up?  It isn't clear.}
These numerical results suggest that feedback can help reduce the high
variance that is sometimes observed with the conventional particle
filter.  The variance issue can be especially severe
if the signal process~\eqref{eqn:Signal_Process} is unstable (e.g.,
$\alpha>0$ in~\eqref{eqn:dyn_lin_a}).
In this case, individual particles can exhibit numerical
instabilities due to time-discretization, floating point
representation etc.  With $\alpha>0$, our numerical
simulations with the bootstrap filter ``blew-up'' (similar conclusions were also arrived
independently in~\cite{DaumHuang10}) while the feedback particle filter is
provably stable based on observability of the
model~\eqref{eqn:dyn_lin_a}-\eqref{eqn:obs_lin_a} (see
\Fig{fig:fig_comp} mse plot with $\alpha=0.5$).

\Fig{fig:fig_comp}(b) depicts a comparison between computational time for the two filtering algorithms on the same problem. 
The time is given in terms of computation time per iteration cycle
(see Algorithm~1 in~\Sec{sec:Alg}) averaged over 100 trials. For simulation
purpose, we use MATLAB R2011b (7.13.0.564) on a 2.66GHz iMac as our test platform.


These numerical results suggest that, for a linear Gaussian
implementation, feedback particle filter has a lower computational cost
compared to the conventional bootstrap particle filter. The main
reason is that the feedback particle filter avoids the computationally
expensive resampling procedure. 

We also carried out simulations where the gain function is
approximated using Algorithm~2. In this case, the mse of the filter is
comparable to the mse depicted in \Fig{fig:fig_comp}(a).  However, the
computation time is larger than the bootstrap particle filter.  This
is primarily on account of the evaluation of the exponentials in
computing $\tilde{p}(x,t)$.  Detailed comparisons between the feedback
particle filter and the bootstrap particle filter will appear
elsewhere.

In general, the main computational burden of the
feedback particle filter is to obtain gain function which can be made
efficient by using various approximation approaches.

\subsection{Nonlinear example}

This nonlinear SDE is chosen to illustrate the tracking capability of the filter in highly nonlinear settings,
\begin{subequations}
\begin{align}
\ud X_t &=  X_t (1-X_t^2) \ud t + \sigma_B \ud B_t,\label{eqn:Signal_Process_example}\\
\ud Z_t &= X_t \ud t + \sigma_W \ud W_t\, .\label{eqn:Obs_Process_example}
\end{align}
\end{subequations}
When $\sigma_B=0$, the
ODE~\eqref{eqn:Signal_Process_example} has two stable equilibria
at $\pm 1$.  With $\sigma_B>0$, the state of the SDE
``transitions'' between these  two ``equilibria''.
\spm{note that this is the quasi-steady state theory developed in my paper with Huisinga, and we use the same example.
\\
Why aren't we giving the dist. of $X_0$?}

				\spm{STOPPED here in exhaustion!}

\Fig{fig:fig_2states_comp} depicts the simulation
results obtained using the nonlinear feedback particle
filter~\eqref{eqn:particle_filter_nonlin_intro}, with 
  $\sigma_B=0.4$, $\sigma_W = 0.2$.    The
implementation is based on an algorithm described in Sec.~IV of~\cite{YangMehtaMeyn_cdc11}, and the details are omitted here on account of space. We initialize the simulation with
two Gaussian clusters.  After a brief period of transients,
these clusters merge into a single cluster, which adequately
tracks the true state including the transition events.

\begin{figure}
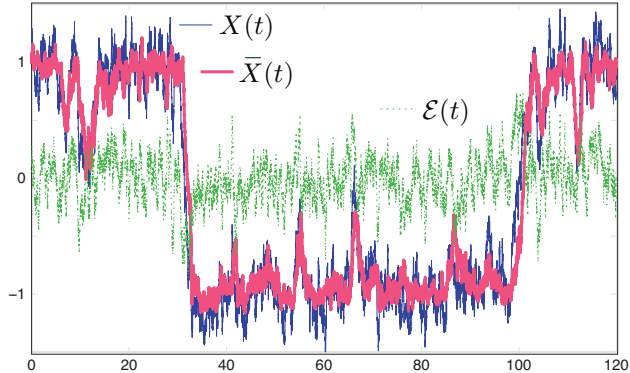

\begin{center}
\Ebox{.5}{2state_est_error}
\end{center}
\caption{Comparison of the true state $X(t)$ and the conditional mean $\bar{X}(t)$ by using feedback particle filter.  The error $\clE(t) =X(t)-\bar{X}(t)$ remains small even during a transition of the state. }
						\vspace{-.25cm}
\label{fig:fig_2states_comp}
\end{figure}

\subsection{Application: nonlinear oscillators}

\label{sub2}

We consider the filtering problem for a nonlinear oscillator:
\begin{align}
\ud \theta_t &= \omega \ud t + \sigma_B \ud B_t \quad \text{mod} \;2\pi,\label{eqn:nl_eqn1}\\
\ud Z_t &= h(\theta_t)\ud t + \sigma_W \ud W_t, \label{eqn:nl_eqn2}
\end{align}
where $\omega$ is the frequency, $h(\theta) =
\half [1+\cos(\theta)]$, and $\{B_t\}$ and $\{W_t\}$ are
mutually independent standard Wiener process. For numerical
simulations, we pick $\omega = 1$ and the standard deviation
parameters $\sigma_B = 0.5$ and $\sigma_W = 0.4$.
\spm{I think  this last line would seem odd to many readers. There isn't enough motivation.  Or, maybe it is simply misplaced -- the explanation could come at the very top of the subsection}
 We consider
oscillator models because of their significance to applications
including neuroscience; cf.,~\cite{YinMehtaMeynShanbhag12TAC}.

The feedback particle filter is given by:
\begin{align}
    \ud \theta^i_t = \omega \ud t  + \sigma_B \ud B^i_t +
    \v(\theta^i_t,t) \circ [\ud Z_t - \frac{1}{2}(h(\theta^i_t) +
    \hat{h}_t)\ud t] 
\quad \text{mod} ~2\pi, 
\label{eqn:osc_fpf}
\end{align}
$ i=1,...,N$,
where the function $\v(\theta,t)$ is obtained via the solution
of the E-L equation:
\begin{equation}
-\frac{\partial}{\partial \theta}\left(\frac{1}{p(\theta,t)}\frac{\partial }{\partial \theta}\{p(\theta,t)\v(\theta,t)\}\right) = -\frac{\sin \theta}{2\sigma_W^2}
\label{eqn:EL_nleg}.
\end{equation}

Although the equation~\eqref{eqn:EL_nleg} can be solved
numerically to obtain the optimal control function
$\v(\theta,t)$, here we investigate a solution based on
perturbation method. Suppose, at some time $t$, $p(\theta,t) =
\frac{1}{2\pi} =: p_0$, the uniform density. In this case, the
E-L equation is given by:
\begin{equation}
\partial_{\theta\theta} \v = \frac{\sin \theta}{2\sigma_W^2}.\nonumber
\end{equation}
A straightforward calculation shows that the solution in this
case is given by
\begin{equation}
\v(\theta,t) = -\frac{\sin \theta}{2\sigma^2_W} =: \v_0(\theta).
\end{equation}

To obtain the solution of the E-L equation~\eqref{eqn:EL_nleg},
we assume that the density $p(\theta,t)$ is a small harmonic
perturbation of the uniform density. In particular, we express
$p(\theta,t)$ as:
\begin{equation}
p(\theta,t) = p_0 + \epsilon \tilde{p}(\theta,t),\label{eqn:p_model}
\end{equation}
where $\epsilon$ is a small perturbation parameter. Since
$p(\theta,t)$ is a density, $\int_{0}^{2\pi}
\tilde{p}(\theta,t) \ud \theta = 0$.

We are interested in obtaining a solution of the form:
\begin{equation}
\v(\theta,t) = \v_0(\theta) + \epsilon \tilde{\v}(\theta,t).\label{eqn:ctl_nleg}
\end{equation}
On substituting the ansatz~\eqref{eqn:p_model}
and~\eqref{eqn:ctl_nleg} in~\eqref{eqn:EL_nleg}, and retaining
only $O(\epsilon)$ term, we obtain the following linearized
equation:
\begin{equation}
\partial_{\theta\theta} \tilde{\v} = -2\pi\partial_{\theta}[(\partial_{\theta}\tilde{p}) \v_0].\label{eqn:line_EL}
\end{equation}

The linearized E-L equation~\eqref{eqn:line_EL} can be solved
easily by considering a Fourier series expansion of $\epsilon
\tilde{p}(\theta,t)$:
\begin{equation}
\epsilon \tilde{p}(\theta,t) = P_c(t) \cos\theta + P_s(t) \sin\theta + \text{h.o.h},\label{eqn:fsexp_p}
\end{equation}
where ``$\text{h.o.h}$'' denotes the terms due to higher order
harmonics. The Fourier coefficients are given by,
\begin{equation}
P_c(t) = \frac{1}{\pi}\int_{0}^{2\pi} p(\theta,t)\cos \theta \ud
\theta, \; P_s(t) = \frac{1}{\pi}\int_{0}^{2\pi} p(\theta,t)\sin \theta \ud \theta.\nonumber
\end{equation}

For a harmonic perturbation, the solution of the linearized E-L
equation~\eqref{eqn:line_EL} is given by:
\begin{align}
\epsilon \tilde{\v}(\theta,t) = \frac{\pi}{4\sigma^2_W}\left( P_c(t)
  \sin 2\theta - P_s(t) \cos 2\theta\right) =: \v_1(\theta;P_c(t),P_s(t)) \label{eqn:u1}
\end{align}

For ``$\text{h.o.h}$'' terms in the Fourier series
expansion~\eqref{eqn:fsexp_p} of the density in $p(\theta,t)$,
the linearized E-L equation~\eqref{eqn:line_EL} can be solved
in a similar manner. In numerical simulation provided here, we
ignore the higher order harmonics, and use a control input as
summarized in the following proposition:
\begin{proposition}
Consider the E-L equation~\eqref{eqn:EL_nleg} where the density
$p(\theta,t)$ is assumed to be a small harmonic perturbation of
the uniform density $\frac{1}{2\pi}$, as defined
by~\eqref{eqn:p_model} and~\eqref{eqn:fsexp_p}. As $\epsilon \rightarrow
0$, the gain function is given by the following asymptotic
formula:
\spm{I added $o(\epsilon)$, since it was vacuous as stated ($\v_1=0$)!}
\begin{equation}
\v(\theta,t) = \v_0(\theta) + \v_1(\theta;P_c(t),P_s(t)) + o(\epsilon),
\label{eqn:u_mod}
\end{equation}
where $P_c(t),P_s(t)$ denote the harmonic coefficients of
density $p(\theta,t)$. For large $N$,
these are approximated by using the formulae:
\begin{equation}
P_c(t) \approx \frac{1}{\pi N}\sum_{j=1}^N \cos \theta_{j}(t), \quad P_s(t) \approx \frac{1}{\pi N}\sum_{j=1}^N \sin \theta_{j}(t).\label{eqn:pcps}
\end{equation}
\qed
\end{proposition}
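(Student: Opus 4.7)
The strategy is a straightforward perturbation analysis of the E\nobreakdash-L equation~\eqref{eqn:EL_nleg}, carried out to first order in $\epsilon$, followed by a Fourier-series solution of the linearized equation, and then a Monte Carlo approximation of the Fourier coefficients. The argument splits cleanly into four steps.

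First, I would substitute the ansatz $p(\theta,t)=p_0+\epsilon\tilde p(\theta,t)$ with $p_0=1/(2\pi)$ and $\v(\theta,t)=\v_0(\theta)+\epsilon\tilde\v(\theta,t)$ into the left-hand side of~\eqref{eqn:EL_nleg}, expand $1/p = 2\pi - (2\pi)^2 \epsilon\tilde p + O(\epsilon^2)$, and collect powers of $\epsilon$. At zeroth order this reproduces $-\partial_{\theta\theta}\v_0=-\sin\theta/(2\sigma_W^2)$, which is solved (with periodic boundary condition and zero mean) by $\v_0(\theta)=-\sin\theta/(2\sigma_W^2)$. At $O(\epsilon)$, after cancellation of the cross term $(\tilde p/p_0)\v_0'$ between the two factors, one obtains exactly the linearized E\nobreakdash-L equation~\eqref{eqn:line_EL}, namely $\partial_{\theta\theta}\tilde\v=-2\pi\,\partial_\theta[(\partial_\theta\tilde p)\v_0]$. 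This step is essentially a bookkeeping exercise; the only subtle point is tracking the cancellation at order $\epsilon$, which is what makes $\v_0$ appear (rather than $p$ itself) inside the bracket.

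Second, I would insert the first-harmonic part of~\eqref{eqn:fsexp_p}, $\epsilon\tilde p=P_c(t)\cos\theta+P_s(t)\sin\theta$, into the right-hand side of the linearized equation. Using $\v_0=-\sin\theta/(2\sigma_W^2)$ together with the product-to-sum identities $\sin^2\theta=(1-\cos 2\theta)/2$ and $\sin\theta\cos\theta=\sin(2\theta)/2$ reduces the forcing to a pure second harmonic. Integrating twice (the periodic boundary conditions on the torus fix the constants uniquely modulo an additive constant, which is in turn fixed by the admissibility condition $\int p\v\,d\theta=0$ after division by $p_0$) produces $\epsilon\tilde\v=\frac{\pi}{4\sigma_W^2}(P_c\sin 2\theta-P_s\cos 2\theta)$, which is exactly $\v_1(\theta;P_c,P_s)$ as defined in~\eqref{eqn:u1}. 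Higher harmonics in $\tilde p$ generate additional second- and higher-harmonic contributions to $\tilde\v$ by the same linearity argument, and the $o(\epsilon)$ error term in~\eqref{eqn:u_mod} absorbs these along with the $O(\epsilon^2)$ terms dropped in the first step.

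Third, the Monte Carlo formulae~\eqref{eqn:pcps} follow from the definition of $P_c,P_s$ as $\frac{1}{\pi}\int_0^{2\pi}p(\theta,t)\cos\theta\,d\theta$ and $\frac{1}{\pi}\int_0^{2\pi}p(\theta,t)\sin\theta\,d\theta$, i.e.\ (up to the factor $1/\pi$) the conditional expectations $\Expect[\cos\theta_t\mid\clZ_t]$ and $\Expect[\sin\theta_t\mid\clZ_t]$. By the remark following \Theorem{thm:kushner}, the particles $\{\theta^j_t\}$ generated by the feedback particle filter have empirical distribution converging to $p(\cdot,t)$, so the sample means $\frac{1}{\pi N}\sum_j\cos\theta^j_t$ and $\frac{1}{\pi N}\sum_j\sin\theta^j_t$ converge almost surely to $P_c(t)$ and $P_s(t)$ respectively by the Law of Large Numbers applied conditionally on $\clZ_{[0,t]}$.

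The main obstacle, such as it is, lies not in the calculus but in keeping the expansion self-consistent: one must verify that the term $(\tilde p/p_0)\v_0'$ generated by expanding $1/p$ cancels the term $(\tilde p/p_0)\v_0'$ generated by the $\tilde p\,\v_0$ piece of $\partial_\theta(p\v)$, so that the only residual first-order forcing is $\partial_\theta[(\partial_\theta\tilde p)\v_0]$ as claimed in~\eqref{eqn:line_EL}. Everything else is routine trigonometric identities and standard convergence of empirical averages.
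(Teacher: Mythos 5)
Your proposal is correct and follows essentially the same route as the paper, which derives the result in the text preceding the proposition (zeroth-order solution $\v_0$, linearization to obtain~\eqref{eqn:line_EL}, Fourier solution~\eqref{eqn:u1}, and empirical approximation of $P_c,P_s$); your explicit verification of the first-order cancellation leaving $\partial_\theta[(\partial_\theta\tilde p)\v_0]$ and your LLN justification of~\eqref{eqn:pcps} merely spell out steps the paper leaves implicit.
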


We next discuss the result of numerical experiments.  The
particle filter model is given by~\eqref{eqn:osc_fpf} with gain
function $\v(\theta^i_t,t)$, obtained using
formula~\eqref{eqn:u_mod}. The number of particles $N=10,000$
and their initial condition $\{\theta^i_0\}_{i=1}^N$ was
sampled from a uniform distribution on circle $[0,2\pi]$.

\begin{figure*}
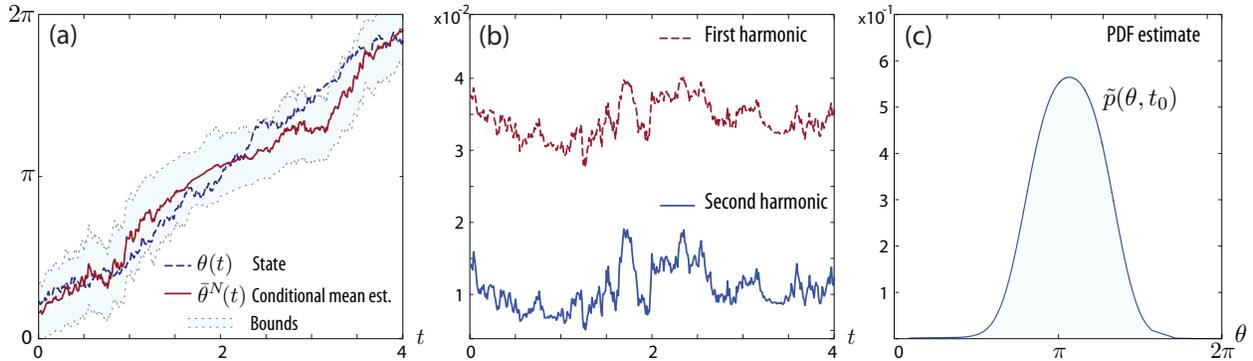

\begin{center}
\Ebox{1}{GDPLOT_samp_mode_dist}
\end{center}
\caption{Summary of the numerical experiments with the nonlinear
oscillator filter: (a) Comparison of the true state
$\{\theta_t\}$ and the conditional mean $\{\bar{\theta}^N_t\}$.
(b) The mean-squared estimate of the first and second harmonics
of the density $p(\theta,t)$ and (c) a plot of a typical
empirical distribution.}
						\vspace{-.25cm}
\label{fig_nlg}
\end{figure*}

\Fig{fig_nlg} summarizes some of the results of the
numerical simulation.  For illustration purposes, we depict
only a single cycle from a time-window after transients due to
initial condition have converged.  Part (a) of the figure
compares the sample path of the actual state $\{\theta_t\}$ (as
a dashed line) with the estimated mean
$\{\bar{\theta}^{(N)}_t\}$ (as a solid line). The shaded area
indicates $\pm$ one standard deviation bounds. Part~(b) of the
figure provides a comparison of the magnitude of the first and
the second harmonics (as dashed and solid lines, respectively)
of the density $p(\theta,t)$.  The density at any time instant
during the time-window is approximately harmonic (see also
part~(c) where the density at one typical time instant is
shown).

Note that at each time instant $t$, the estimated mean, the
bounds and the density $p(\theta,t)$ shown here are all
approximated from the ensemble $\{\theta^i_t\}_{i=1}^N$. For
the sake of illustration, we have used a Gaussian mixture
approximation to construct a smooth approximation of the
density.

\section{Conclusions}

In this paper, we introduced a new formulation of the nonlinear filter,
referred to as the {\em feedback particle filter}.  The feedback
particle filter provides for a generalization of the Kalman
filter to a general class of nonlinear non-Gaussian problems.
Feedback particle filter inherits
many of the properties that has made the Kalman filter so widely
applicable over the past five decades, including innovation error and
the feedback structure (see \Fig{fig:fig_FPF_KF}).

Feedback is important on account of the issue of {\em robustness}.  In
particular, feedback can help reduce the high
variance that is sometimes observed in the conventional particle
filter. Numerical results are presented to support this claim (see
\Fig{fig:fig_comp}).

Even more significantly, the structural aspects of the Kalman filter have been as
important as the algorithm itself in design, integration, testing and
operation of a larger system involving filtering problems
(e.g., navigation systems).  We expect feedback particle filter to
similarly provide for an integrated framework, now for nonlinear
non-Gaussian problems.  We refer the reader to our
paper~\cite{YangHuangMehta12ACC} where feedback particle filter-based
algorithms for nonlinear filtering with data association uncertainty are described.

\section{Appendix}

\subsection{Calculation of KL divergence}
\label{cal_KL}

Recall the definition of K-L divergence for densities,
\begin{equation}
\KL \left(p_n \|  \hap^*_n \right) = \int_{\Re} p_n(s) \ln \Bigl( \frac{p_n(s)}{\hap^*_n(s)} \Bigr) \ud s.\nonumber
\end{equation}
We make a co-ordinate transformation $s = x+v(x)$ and
use~\eqref{eqn:pks_2} to express the K-L divergence as:
\notes{Is this right:  $ \hap^*_n(x+u(x)|Z_{t_n})$?}
\begin{align}
\KL \left(p_n \| \hap^*_n\right) = \int_{\Re} \frac{p_n^-(x)}{|1+v'(x)|} \ln (\frac{p_n^-(x)}{|1+v'(x)|  \,  \hap^*_n(x+v(x))})|1+v'(x)| \ud x
 \nonumber
\end{align}
The expression for K-L divergence given in~\eqref{eqn:KLD}
follows on using~\eqref{eqn:pks_1}.

\subsection{Solution of the optimization problem}
\label{der_EL}

Denote:
\begin{equation}
\mathcal{L} (x,v,v') = -p_n^{-}(x)  \Bigl( \ln |1+v'| + \ln(p_n^{-}(x+v)\pyx(Y_{t_n}|x+v))  \Bigr).
\label{eqn:LAG}
\end{equation}
The optimization problem~\eqref{eqn:optimiz_problem} is a
calculus of variation problem:
\begin{equation}
\min_{v} \int \mathcal{L} (x,v,v') \ud x.\nonumber
\end{equation}
The minimizer is obtained via the analysis of first variation
given by the well-known Euler-Lagrange equation:
\begin{equation}
\frac{\partial \mathcal{L}}{\partial v} = \ddx \left(\frac{\partial \mathcal{L}}{\partial v'}\right),\nonumber
\end{equation}
Explicitly substituting the expression~\eqref{eqn:LAG} for
$\mathcal{L}$, we obtain~\eqref{eqn:EL_eqn}.

\subsection{Derivation of the Forward Equation}
\label{apdx:Derivation_FPK}

We denote the filtration $\clB_t = \sigma(X^i_0,B^i_s: s\le
t)$,  and we recall that $\clZ_t = \sigma(Z_s:s\le t)$ for
$t\ge 0$.   These two filterations are \textit{independent} by
construction.

On denoting $\tilde{a}(x,t)= a(x) + u(x,t)$, the particle
evolution \eqref{eqn:particle_model} is expressed,
\begin{equation}
X^i_t = X^i_0 + \int_0^t \tilde{a}(X_s^i,s) \ud s + \int_0^t
\v(X^i_s,s) \ud Z(s) + \sigma_B B^i_t.
\end{equation}
By assumption on Lipschitz continuity of $\tilde{a}$ and $\v$,
there exists a unique solution that is adapted to the larger
filtration $\clB_t \vee \clZ_t = \sigma(X^i_0,B^i_s, Z_s: s\le t)$. In fact, there is a functional $F_t$ such
that,
\spm{I wish we could go to the Doeblin sample path construction and not worry about weak solutions}
\begin{equation}
X_t^i = F_t(X^i_0,B^i_t,Z^t),
\label{eqn:Xti_fn_form}
\end{equation}
where $Z^t:=\{Z_s:0 \le s \le t \}$ denotes the trajectory.

The conditional distribution of $X^i_t$ given $\clZ_t =
\sigma(Z_s:s\le t)$ was introduced  in \Sec{s:believe}:
Its density is denoted $p(x,t)$, defined by any bounded and
measurable function $f\colon\Re\to\Re$ via,
\[
{\sf E}[f(X_t^i)\mid \clZ_t ] = \int_{\Re} p(x,t) f(x) \ud x =: \langle p_t,f\rangle .
\]

We begin with a result that is the key to proving \Proposition{thm:FPK}.
The proof
of \Lemma{lem:swap} is omitted on account of space.
 \notes{we need a reference for
similar results to lem:swap in the literature}

\begin{lemma}
\label{lem:swap}
Suppose that $f$ is an 
$\clB_t \vee \clZ_t $-adapted process  satisfying $ \Expect
\int_0^t |f(s)|^2 \ud s  < \infty$. Then, 
\begin{align}
{\sf E} \Bigl[\int_0^t f(s) \ud s | \clZ_t \Bigr] &= \int_0^t
{\sf E} [f(s) | \clZ_s ] \ud s,
\label{eqn:int_f}
\\
{\sf E}\Bigl[\int_0^t f(s) \ud Z_s | \clZ_t \Bigr] &= \int_0^t
{\sf E}[f(s) | \clZ_s ] \ud Z_s.\label{eqn:int_g}
\end{align}
\qed
\end{lemma}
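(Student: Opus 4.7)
\textbf{Proof proposal for Lemma~\ref{lem:swap}.}
The plan is to exploit the independence of the two filtrations $\clB_\infty^{(i)} := \sigma(X^i_0, B^i_s : s\ge 0)$ and $\clZ_\infty := \sigma(Z_s : s\ge 0)$, which follows from the modelling assumption that the particle's initial condition and driving Brownian motion are independent of the true signal $\bfmX$ and of the observation noise $\bfmW$.  The key auxiliary fact I would establish first is:
\begin{equation*}
	\Expect[\eta \mid \clZ_t] = \Expect[\eta \mid \clZ_s]
	\qquad \hbox{for every } \clB_s^{(i)}\vee\clZ_s\text{-measurable } \eta\in L^1,\ s\le t.
\end{equation*}
This follows because, by the Doob--Dynkin lemma, $\eta = g(\xi,Z_{[0,s]})$ for some measurable $g$ and a $\clB_s^{(i)}$-measurable $\xi$; since $\clB_s^{(i)}$ is independent of $\clZ_\infty$, one integrates out $\xi$ against its (unconditional) law, and the outcome depends only on the trajectory $Z_{[0,s]}$.

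For \eqref{eqn:int_f}, I would test against an arbitrary bounded $\clZ_t$-measurable $G$, apply classical Fubini to $\Expect[G\int_0^t f(s)\,\ud s] = \int_0^t \Expect[G f(s)]\,\ud s$, and use the auxiliary fact together with the tower property to replace $\Expect[Gf(s)]$ by $\Expect[G\Expect[f(s)\mid\clZ_s]]$.  A second application of Fubini and the arbitrariness of $G$ give the identity.

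The identity~\eqref{eqn:int_g} is the more delicate one and I would do it by the standard approximation scheme.  First restrict to elementary $\clB\vee\clZ$-adapted integrands of the form $f(s) = \sum_k \eta_k \ind_{(s_k,s_{k+1}]}(s)$ with $\eta_k$ bounded and $\clB_{s_k}^{(i)}\vee\clZ_{s_k}$-measurable.  For such an $f$,
\begin{equation*}
	\Expect\Bigl[\sum_k \eta_k (Z_{s_{k+1}}-Z_{s_k}) \Bigm| \clZ_t\Bigr]
	= \sum_k \Expect[\eta_k\mid\clZ_t]\,(Z_{s_{k+1}}-Z_{s_k})
	= \sum_k \Expect[\eta_k\mid\clZ_{s_k}]\,(Z_{s_{k+1}}-Z_{s_k}),
\end{equation*}
where the first equality pulls the $\clZ_t$-measurable increments out of the conditional expectation and the second invokes the auxiliary fact.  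The right-hand side is exactly $\int_0^t \Expect[f(s)\mid\clZ_s]\,\ud Z_s$, so the identity holds on elementary integrands.  To pass to the $L^2$-closure, write $Z_s = \int_0^s h(X_r)\,\ud r + \sigma_W W_s$, split the stochastic integral into a Lebesgue part (handled by~\eqref{eqn:int_f} applied to $f\cdot h(X)$) and a Brownian part, and invoke the It\^o isometry in the enlarged filtration $\clB_t\vee\clZ_t$ on the left, and the corresponding isometry in $\clZ_t$ on the right, using Jensen's inequality $\Expect[\Expect[f(s)\mid\clZ_s]^2]\le \Expect[f(s)^2]$ to see that the approximating sequence on the right is also Cauchy in $L^2$.

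The main obstacle is the last step: one must verify that $\int_0^t f(s)\,\ud Z_s$ is a well-defined It\^o integral in the enlarged filtration $\clB_t\vee\clZ_t$.  Because $B^i$ is independent of $(X,W)$, the observation Brownian motion $W$ remains a Brownian motion in this enlarged filtration, so $Z$ is still a semimartingale there with the same decomposition, and the stochastic integral is well defined.  Modulo this verification, the remainder of the argument is a standard density/isometry argument.
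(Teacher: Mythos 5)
The paper does not actually contain a proof of this lemma --- it states that the proof ``is omitted on account of space'' --- so there is nothing internal to compare your argument against. On its own terms, your proposal is essentially the standard (and correct) argument for this kind of Fubini/conditional-expectation interchange: the conditional-independence fact $\Expect[\eta\mid\clZ_t]=\Expect[\eta\mid\clZ_s]$ for $\clB_s\vee\clZ_s$-measurable $\eta$ is exactly the right engine (it holds because $\clB_\infty$ is independent of $\clZ_\infty$, so $\clB_s\vee\clZ_s$ and $\clZ_t$ are conditionally independent given $\clZ_s$), the duality argument for \eqref{eqn:int_f} is fine, and the elementary-integrand computation for \eqref{eqn:int_g} is correct, including the additional use of the auxiliary fact to identify $\Expect[f(s)\mid\clZ_s]$ with $\Expect[\eta_k\mid\clZ_{s_k}]$ on $(s_k,s_{k+1}]$.

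Two points in the closure step need repair. First, your justification that the left-hand integral is well defined --- ``$W$ remains a Brownian motion in the enlarged filtration $\clB_t\vee\clZ_t$'' --- is not correct as stated: $W_t=\sigma_W^{-1}(Z_t-\int_0^t h(X_r)\,\ud r)$ involves the unobserved signal, so $W$ is not even adapted to $\clB_t\vee\clZ_t$. The fix is either to work in the full filtration $\sigma(X^i_0,B^i_s,X_s,W_s:s\le t)$, with respect to which $W$ \emph{is} a Brownian motion and to which $f$ is adapted, or to use the innovations decomposition $\ud Z_t=\hat h_t\,\ud t+\sigma_W\,\ud\nu_t$, which is valid with respect to $\clB_t\vee\clZ_t$ as well as $\clZ_t$ precisely because $\clB_t$ is independent of $(\bfmX,\bfmZ)$; either route makes both sides of \eqref{eqn:int_g} well-defined It\^o integrals that agree with the elementary sums. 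Second, the $L^2$-passage for the drift part of $Z$ requires more than $\Expect\int_0^t|f(s)|^2\,\ud s<\infty$: you need something like $\Expect\int_0^t|f(s)h(X_s)|\,\ud s<\infty$ (e.g., $h$ bounded or $\Expect\int_0^t h(X_s)^2\,\ud s<\infty$) to control $\int f_n h(X)\,\ud s\to\int f h(X)\,\ud s$, and one should also invoke a progressively measurable version (optional projection) of $s\mapsto\Expect[f(s)\mid\clZ_s]$ so that the right-hand integrals are defined. These are standard caveats rather than conceptual gaps, but they should be stated.
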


We now provide a proof of the Proposition~\ref{thm:FPK}.  

\emph{Proof of Proposition~\ref{thm:FPK}}
Applying It\^o's formula to equation~\eqref{eqn:particle_model}
gives, for any smooth and bounded function $f$,
\[
\ud f(X_t^i) = \clL f(X_t^i) \ud t + \v(X_t^i,t) \frac{\partial
  f}{\partial x}(X_t^i) \ud Z_t+ \sigma_B \frac{\partial
  f}{\partial x}(X_t^i) \ud B_t^i,
\]
where $\clL f := (a+u)\frac{\partial f}{\partial x} +
\frac{1}{2} (\sigma_W^2 \v^2 +\sigma_B^2)\frac{\partial^2
f}{\partial x^2}$. \notes{This is just part of the heuristic
derivation of Ito's formula:   $\ud Z^2 = \sigma_B^2 \ud t$.}
Therefore,
\[
 f(X_t^i) = f(X_0^i) + \int_0^t \clL f(X_s^i) \ud s + \int_0^t \v(X_s^i,s) \frac{\partial
  f}{\partial x}(X_s^i) \ud Z_s + \sigma_B \int_0^t \frac{\partial
  f}{\partial x}(X_s^i) \ud B_s^i.
\]
Taking conditional expectations on both sides,
\begin{align*}
\langle p_t,f\rangle    = {\sf E}[f(X_0^i)\mid \clZ_t ] & +   {\sf E}\Bigl[ \int_0^t \clL f(X_s^i) \ud s |\clZ_t\Bigr]
+  {\sf E}\Bigl[ \int_0^t \v(X_s^i,s) \frac{\partial   f}{\partial x}(X_s^i) \ud Z_s \mid \clZ_t\Bigr]
  \\
  &+ \sigma_B  {\sf E}\Bigl[ \int_0^t \frac{\partial
  f}{\partial x}(X_s^i) \ud B_s^i  \mid \clZ_t\Bigr]
\end{align*}
On applying
\Lemma{lem:swap}, and the fact that $B_t^i$ is a Wiener
process, we conclude that
\[
\langle p_t,f\rangle    =
 \langle p_0,f\rangle  + \int_0^t \langle p_s,Lf\rangle  \ud s + \int_0^t \langle p_s, \v \frac{\partial
  f}{\partial x} \rangle \ud Z_s \, .
\]
The forward equation~\eqref{eqn:mod_FPK} follows using
integration by parts. \qed





\subsection{Euler-Lagrange equation for the continuous-time filter}
\label{apdx:EL_uv}

In this section we describe, formally, the continuous-time limit of
the discrete-time E-L BVP~\eqref{eqn:EL_eqn}.
In the continuous-time case, the control and the observation models
are of the form (see~\eqref{eqn:particle_model}
and~\eqref{eqn:Obs_Process}):
\begin{align*}
\ud U^i_t & = u(X^i_t, t)\ud t + \v(X^i_t, t)\ud Z_t,\\
\ud Z_t & = h(X_t) \ud t + \sigma_W \ud W_t.
\end{align*}
In discrete-time, these are approximated as
\begin{align}
\sTriangle U^i_t & =  u(X^i_t, t)\sTriangle t + \v(X^i_t, t)\sTriangle Z_t,
\label{eqn:control_ansatz}\\
\sTriangle Z_t & =  h(X_t) \sTriangle t + \sigma_W \sTriangle W_t,\nonumber
\end{align}
where $\sTriangle t$ is the small time-increment at $t$.  It follows that the conditional distribution of
 $Y_t\doteq \frac{\sTriangle Z_t}{\sTriangle t}$ given $X_t$ is the density,
\begin{equation}
\pyx(Y_t | \cdot ) =
\frac{1}{\sqrt{2\pi\sigma_W^2/\sTriangle t}}\exp\left(-\frac{(\sTriangle Z_t
    -h(\cdot)\sTriangle t)^2}{2\sigma_W^2\sTriangle t}\right).
\label{eqn:pyx_ct_dt}
\end{equation}

Substituting~\eqref{eqn:control_ansatz}-\eqref{eqn:pyx_ct_dt} in the E-L BVP~\eqref{eqn:EL_eqn} for the
continuous-discrete time case, we arrive at the formal equation:
\begin{align}
\frac{\partial}{\partial x}\left(\frac{p(x,t)}{1+u'\sTriangle t +
    \v'\sTriangle Z_t}\right) = \left.
 p(x,t)\frac{\partial}{\partial v} \Bigl( \ln p(x+v,t)
 + \ln \pyx (Y_t \mid x+v)\Bigr) \right|_{v=u\sTriangle t + \v
 \sTriangle Z_t}.\label{eqn:ctns_EL}
\end{align}

For notational ease, we
use primes to denote partial
derivatives with respect to $x$: $p$ is used to denote $p(x,t)$,
$p':= \frac{\partial p}{\partial x}(x,t)$, $p'' := \frac{\partial^2
  p}{\partial x^2}(x, t)$, $u':=\frac{\partial u}{\partial x}(x,t)$,
$\v':=\frac{\partial \v}{\partial x}(x,t)$ etc.  Note that the
time $t$ is fixed.

A sketch of calculations to obtain~\eqref{eqn:EL_v*}
and~\eqref{eqn:u_intermsof_v*} starting from~\eqref{eqn:ctns_EL}
appears in the following three steps:




\smallskip

\noindent {\bf Step 1:}
The three terms in~\eqref{eqn:ctns_EL} are simplified as:
\begin{align}
&
\frac{\partial}{\partial x} \left(\frac{p}{1+u'\sTriangle t+\v'\sTriangle Z_t}\right) = p'- f_1 \sTriangle t -(p'\v'+p\v'')\sTriangle Z_t
	\nonumber\\
&\left. p\frac{\partial }{\partial v}\ln p(x+v) \right|_{v=u\sTriangle t + \v
 \sTriangle Z_t} = p' +
f_2 \sTriangle t+ (p''\v - \frac{p'^2 \v}{p})\sTriangle Z_t
	\nonumber\\
& \left. p\frac{\partial }{\partial v}\ln \pyx (Y_t|x+ v) \right|_{v=u\sTriangle t + \v
 \sTriangle Z_t} = \frac{p}{\sigma_W^2}(h' \sTriangle Z_t -hh'\sTriangle t) + p h''\v\sTriangle t\nonumber
\end{align}
where we have used It\^{o}'s rules $(\sTriangle Z_t)^2 = \sigma_W^2 \sTriangle t$,
$\sTriangle Z_t \sTriangle t =0$ etc.,  and where
\begin{align}
&f_1 = (p'u'+pu'') - \sigma_W^2 (p'\v'^2+2p\v'\v''),\nonumber\\
&f_2 = (p''u - \frac{p'^2 u}{p}) + \sigma_W^2 \v^2 \left(\frac{1}{2} p'''-\frac{3 p'p''}{2p}+\frac{p'^3}{p^2}\right).\nonumber
\end{align}


Collecting terms in $O(\sTriangle Z_t)$ and $O(\sTriangle t)$, after some
simplification, leads to the following ODEs:
\begin{align}
{\cal E}(\v) &= \frac{1}{\sigma_W^2} h'(x) \label{eqn:EL_v_appdx}\\
{\cal E}(u) &= -\frac{1}{\sigma_W^2} h(x)h'(x) + h''(x) \v + \sigma_W^2 G(x,t)\label{eqn:EL_u}
\end{align}
where $ {\cal E}(\v) = - \frac{\partial}{\partial x}\left(
\frac{1}{p(x,t)} \frac{\partial
  }{\partial x} \{ p(x,t)\v(x,t) \}\right),
$
and $G = -2 \v'\v'' - (\v')^2 (\ln p)' + \frac{1}{2}\v^2 (\ln
p)'''$.

\smallskip

\noindent {\bf Step 2.}  Suppose $(u,\v)$ are admissible
solutions of the E-L
BVP~\eqref{eqn:EL_v_appdx}-\eqref{eqn:EL_u}. Then it is claimed
that
\begin{align}
-(p\v)' &= \frac{h-\hat{h}}{\sigma_W^2}p\label{eqn:pv} \\
-(pu)' &= -\frac{(h-\hat{h})\hat{h}}{\sigma_W^2}p - \frac{1}{2} \sigma_W^2 (p \v^2)''.\label{eqn:ns2}
\end{align}
Recall that admissible here means
\begin{equation}
\lim_{x\rightarrow\pm \infty} p(x,t) u(x,t) = 0,\quad
\lim_{x\rightarrow\pm \infty} p(x,t) \v(x,t) = 0.
\label{eqn:EL_BC_appdx}
\end{equation}

To show~\eqref{eqn:pv}, integrate~\eqref{eqn:EL_v_appdx} once to obtain
\[
-(p\v)' = \frac{1}{\sigma_W^2} h p + C p,
\]
where the constant of integration
$C=-\frac{\hat{h}}{\sigma_W^2}$ is obtained by integrating once
again between $-\infty$ to $\infty$ and using the boundary
conditions for $\v$~\eqref{eqn:EL_BC_appdx}.  This
gives~\eqref{eqn:pv}.

To show~\eqref{eqn:ns2}, we denote its right hand side as
$\mathcal{R}$ and claim
\begin{equation}
\left(\frac{\mathcal{R}}{p}\right)' = -\frac{hh'}{\sigma_W^2}+h''\v+\sigma_W^2 G.\label{eqn:ns3}
\end{equation}
The equation~\eqref{eqn:ns2} then follows by using the
ODE~\eqref{eqn:EL_u} together with the boundary conditions for
$u$~\eqref{eqn:EL_BC_appdx}.  The verification of the claim
involves a straightforward calculation, where we
use~\eqref{eqn:EL_v_appdx} to obtain expressions for $h'$ and
$\v''$. The details of this calculation are omitted on account
of space.


\smallskip

\noindent {\bf Step 3.}  The E-L equation for $\v$ is given
by~\eqref{eqn:EL_v_appdx} which is the same
as~\eqref{eqn:EL_v*}.
The proof of~\eqref{eqn:u_intermsof_v*} involves a short calculation
starting from~\eqref{eqn:ns2}, which is simplified to the
form~\eqref{eqn:u_intermsof_v*} by using~\eqref{eqn:pv}.

\begin{remark}
The derivation of Euler-Lagrange equation, as presented above,
is a heuristic on account of Step 1.  A similar heuristic also
appears in the original paper of Kushner~\cite{Kushner64SIAM}.
There, the Kushner-Stratonovich PDE~\eqref{eqn:Kushner_eqn} is
derived by considering a continuous-time limit of the Bayes
formula~\eqref{eqn:pks_1}.  The It\^{o}'s rules are used to
obtain the limit.  Rigorous justification of the calculation in
Step 1, or its replacement by an alternate argument is the
subject of future work.

The calculation in Steps 2 and 3 require additional regularity
assumptions on density $p$ and function $h$: $p$ is $C^3$ and $h$ is $C^2$.
\end{remark}


\subsection{Proof of
  Proposition~\ref{prop:existence_uniqueness_properties_EL}.}
\label{apdx:uniqueness}

Consider the ODE~\eqref{eqn:EL_v*}.  It is a linear ODE whose
unique solution is given by

{
\begin{equation}
\v(x,t) = \frac{1}{p(x,t)}\left(C_1 + C_2 \int_{-\infty}^x p(y,t) \ud y -
  \frac{1}{\sigma_W^2} \int_{-\infty}^x h(y) p(y,t) \ud y\right),
\label{eqn:closed_form_solution_of_BVP}
\end{equation}}
\noindent where the constant of integrations $C_1 = 0$ and $C_2
= \frac{\hat{h}_t}{\sigma_W^2}$ because of the boundary
conditions for $\v$.  Part~2 is an easy consequence of the
minimum principle for elliptic PDEs~\cite{Evans:98}.

\subsection{Proof of \Theorem{thm:kushner}}
\label{apdx:consistency_pf}

It is only necessary to show that with this choice of $\{
u,\v\}$,  we have $\ud p(x,t) = \ud p^*(x,t)$, for all $x$ and
$t$,  in the sense that they are defined by identical
stochastic differential equations.   Recall $\ud p^*$ is
defined according to the K-S equation~\eqref{eqn:Kushner_eqn},
and $\ud p$ according to the forward
equation~\eqref{eqn:mod_FPK}.

If $\v$ solves the E-L BVP~\eqref{eqn:EL_v*} then
using~\eqref{eqn:closed_form_solution_of_BVP},
\begin{equation}
\frac{\partial }{\partial x}(p \v) =
-\frac{1}{\sigma_W^2}(h-\hat{h})p.
\label{eqn:ppvp}
\end{equation}
On multiplying both sides of~\eqref{eqn:u_intermsof_v*} by $-p$, we
have
\begin{align*}
-up & = \frac{1}{2}(h-\hat{h})p \v - \frac{1}{2} \sigma_W^2 (p \v)
\frac{\partial \v}{\partial x}  + \hat{h} p \v \\
& = -  \frac{1}{2} \sigma_W^2  \frac{\partial (p \v)}{\partial x} \v - \frac{1}{2} \sigma_W^2 (p \v)
\frac{\partial \v}{\partial x}  + \hat{h} p \v\\
& = -  \frac{1}{2} \sigma_W^2 \frac{\partial }{\partial x}(p \v^2) + \hat{h} p \v,
\end{align*}
where we have used~\eqref{eqn:ppvp} to obtain the second equality.
Differentiating once with respect to $x$ and using~\eqref{eqn:ppvp}
once again,
\begin{equation}
 -\frac{\partial}{\partial x}(u p) +  \frac{1}{2} \sigma_W^2
 \frac{\partial^2}{\partial x^2}(p \v^2) = - \frac{\hat{h}}{\sigma_W^2}
 (h-\hat{h})p.
\label{eqn:pupp}
\end{equation}

Using~\eqref{eqn:ppvp}-\eqref{eqn:pupp} in the forward
equation~\eqref{eqn:mod_FPK}, we have
\begin{align*}
\ud p & =  \clL^\dagger p + \frac{1}{\sigma_W^2}( h-\hat{h} )(\ud Z_t -
\hat{h} \ud t)p\, .
\end{align*}
This is precisely the SDE
\eqref{eqn:Kushner_eqn}, as desired.

\subsection{Proof of \Theorem{thm_lin}}
\label{pf_thm1}

The Gaussian density is given by:
\begin{equation}
p(x,t) = \frac{1}{\sqrt{2 \pi \Sigma_t}} \exp(-\frac{(x-\mu_t)^2}{2\Sigma_t}),\label{eqn:l21}
\end{equation}

The density~\eqref{eqn:l21} is a function of the stochastic
process $\mu_t$.  Using It\^o's formula,
 \[
 \ud p(x,t) = \frac{\partial p}{\partial \mu} \ud \mu_t +
 \frac{\partial p}{\partial \Sigma} \ud \Sigma_t +
 \frac{1}{2}\frac{\partial^2 p}{\partial \mu^2}\ud \mu^2_t,
 \]
 where $ \frac{\partial p}{\partial \mu} =
 \frac{x-\mu_t}{\Sigma_t}p$, $\frac{\partial p}{\partial \Sigma}
 =
 \frac{1}{2\Sigma_t}\left(\frac{(x-\mu_t)^2}{\Sigma_t}-1\right)
 p $, and $\frac{\partial^2 p}{\partial \mu^2} =
 \frac{1}{\Sigma_t}\left(\frac{(x-\mu_t)^2}{\Sigma_t}-1\right)
 p$. Substituting these into the forward equation~\eqref{eqn:mod_FPK}, we obtain a quadratic equation $
 A x^2 + B x =0$, where
 \begin{align}
 A &= \ud \Sigma_t - \left(2\alpha\Sigma_t +\sigma_B ^2 - \frac{\gamma ^2 \Sigma^2_t}{\sigma^2_W}\right)\ud t,\nonumber\\
 B &= \ud \mu_t - \left(\alpha\mu_t\ud t + \frac{\gamma \Sigma_t}{\sigma^2_W}(\ud Z_t-\gamma \mu_t\ud t)\right).\nonumber
 \end{align}
 This leads to the model~\eqref{eqn:mod1} and~\eqref{eqn:mod2}.
 \qed

\spm{don't understand section heading - the subsection seems to be about KF and other misc. topics}
\subsection{BVP for Multivariable Feedback Particle Filter}
\label{appdx:sol_multi}

Consider the multivariable linear system,
\begin{subequations}
\begin{align}
\ud X_t &= \alpha X_t \ud t + \sigma_B \ud B_t \label{eqn:multi1}\\
\ud Z_t &= \gamma^T X_t \ud t + \sigma_W \ud W_t \label{eqn:multi2}
\end{align}
\end{subequations}
where $X_t \in \Re^d$, $Z_t \in \Re^1$, $\alpha$ is an $d
\times d$ matrix, $\gamma$ is an $d \times 1$ vector, $\{B_t\}$
is an $d-$dimensional Wiener process, $\{W_t\}$ is a scalar
Wiener process, and $\{B_t\}$,$\{W_t\}$ are assumed to be
mutually independent. We assume the initial distribution
$p^\ast (x,0)$ is Gaussian with mean vector $\mu_0$ and
variance matrix $\Sigma_0$.

The following proposition shows that the Kalman gain is a
solution of the multivariable BVP~\eqref{e:bvp_divergence_multi}, the
Kalman gain solution does not equal the solution $\v_g$
(see~\eqref{e:integral_form_of_solution_multi}), and that the solution
given by $\v_g$ is not integrable with respect to $p$:

\begin{proposition}
\label{prop_multi} Consider the d-dimensional linear
system~\eqref{eqn:multi1}-\eqref{eqn:multi2}, where $d\ge 2$.
Suppose $p(x,t)$ is assumed to be Gaussian: $p(x,t) =
\frac{1}{(2\pi)^{\frac{d}{2}} |\Sigma_t|^\frac{1}{2}} \exp
\left(-\frac{1}{2}(x-\mu_t)^T \Sigma_t^{-1} (x-\mu_t)\right)$,
where $x = (x_1,x_2,...,x_d)^T$, $\mu_t$ is the mean,
$\Sigma_t$ is the covariance matrix, and $|\Sigma_t|>0$ denotes
the determinant.
\begin{enum}
\item One solution of the
    BVP~\eqref{e:bvp_divergence_multi} is given by the
    Kalman gain:
    \begin{equation}
    \v (x,t) = \frac{1}{\sigma_W^2}\Sigma_t \gamma
    \label{eqn:multi_kalman}
    \end{equation}

\item Suppose that the Kalman gain $K$ given in
    \eqref{eqn:multi_kalman} is non-zero. For this solution to
    \eqref{e:bvp_divergence_multi}, there does not exist a
    function $\phi$ such
  that $p \v = \nabla \phi$.  
  \spm{I am still unhappy in July 2012.
  \\
  Written many months ago, let's discuss:
  HELP:  Have we explained relevance of $p \v = \nabla \phi$?  Should we say: " That is,  \eqref{eqn:pv_is_gradient} is violated"}

\item
Consider the solution $\v_g(x,t)$ of the
  BVP~\eqref{e:bvp_divergence_multi} as given
  by~\eqref{e:integral_form_of_solution_multi}.
  This gain function is unbounded:  $|\v_g(x,t)|\rightarrow\infty$ as
  $|x|\rightarrow\infty$, and moreover
  $$\int_{\Re^d} |\v_g(x,t)|p(x,t) \ud x = \infty,\quad \int_{\Re^d}
  |\v_g(x,t)|^2 p(x,t) \ud x = \infty.$$
\end{enum}
\end{proposition}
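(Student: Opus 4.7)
The plan is to dispatch the three parts of Proposition~\ref{prop_multi} by, respectively, a direct substitution, a curl-free obstruction argument, and an asymptotic expansion of the Green's-function integral at infinity.

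For part~1, I would write $K \eqdef \frac{1}{\sigma_W^2}\Sigma_t\gamma$ and exploit the fact that $K$ is independent of $x$. Then $\nabla\cdot(p K) = K\cdot\nabla p$, and since $p$ is Gaussian with mean $\mu_t$ and covariance $\Sigma_t$, we have $\nabla p(x,t) = -\Sigma_t^{-1}(x-\mu_t)\,p(x,t)$. Substituting,
\[
\nabla\cdot(pK) = -K^{\transpose}\Sigma_t^{-1}(x-\mu_t)\,p = -\frac{1}{\sigma_W^2}\gamma^{\transpose}(x-\mu_t)\,p = -\frac{1}{\sigma_W^2}\bigl(h(x)-\hat h_t\bigr)p,
\]
where $h(x)=\gamma^{\transpose}x$ gives $\hat h_t=\gamma^{\transpose}\mu_t$. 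This is exactly~\eqref{e:bvp_divergence_multi}, so the Kalman gain is a solution.

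For part~2, assume for contradiction that $pK=\nabla\phi$ for some scalar $\phi$, which forces the vector field $pK$ to be curl-free: $\partial_i(pK_j)=\partial_j(pK_i)$ for all $i,j$. Using $K$ constant and $\partial_i p = -(\Sigma_t^{-1}(x-\mu_t))_i\,p$, this becomes
\[
K_j\,\bigl(\Sigma_t^{-1}(x-\mu_t)\bigr)_i = K_i\,\bigl(\Sigma_t^{-1}(x-\mu_t)\bigr)_j, \qquad \forall\,x\in\Re^d,\; i,j.
\]
Differentiating in $x_\ell$ yields $K_j(\Sigma_t^{-1})_{i\ell}=K_i(\Sigma_t^{-1})_{j\ell}$, and since $\Sigma_t^{-1}$ is invertible with $d\ge 2$, choosing distinct indices forces $K=0$, contradicting the hypothesis that the Kalman gain is non-zero.

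For part~3, I would expand $\Gamma(p,h)(x)$ for $|x|\to\infty$. From $|y-x|^{-d}=|x|^{-d}\bigl(1+d\,x\cdot y/|x|^2+O(|x|^{-2})\bigr)$ and the identity $\int(h(y)-\hat h_t)p(y)\,dy=0$, the leading term comes from pairing $y/|x|^d$ with $(h-\hat h_t)p$ and gives $\int y(h(y)-\hat h_t)p(y)\,dy=\Sigma_t\gamma$; similarly the next term contributes $-d\,x(x^{\transpose}\Sigma_t\gamma)/|x|^{d+2}$. Hence
\[
\Gamma(p,h)(x) = \frac{1}{d\omega_d}\left(\frac{\Sigma_t\gamma}{|x|^d} - \frac{d\,x(x^{\transpose}\Sigma_t\gamma)}{|x|^{d+2}}\right) + O(|x|^{-d-2}),
\]
which is $\Theta(|x|^{-d})$ along generic directions. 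Dividing by $p(x,t)\sim \exp\!\bigl(-\tfrac12 x^{\transpose}\Sigma_t^{-1}x\bigr)$ shows $|\v_g(x,t)|$ grows like a polynomial times $1/p$, i.e.\ exponentially, proving unboundedness. For integrability, $|\v_g| p = |\Gamma|/\sigma_W^2$, and $\int_{\Re^d}|x|^{-d}\,dx$ diverges logarithmically in polar coordinates ($\int r^{-1}\,dr$), so ${\sf E}[|\v_g|]=\infty$. The $L^2$ case is easier: $|\v_g|^2 p = |\Gamma|^2/(p\,\sigma_W^4)$ grows exponentially and is trivially non-integrable.

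The main obstacle is the asymptotic expansion in part~3: one must show that the two $O(|x|^{-d})$ terms do not cancel along every direction, so that the leading behaviour really controls the decay. I would verify this by choosing $x$ parallel to $\Sigma_t\gamma$ (giving coefficient $(1-d)|\Sigma_t\gamma|$) and $x$ perpendicular to $\Sigma_t\gamma$ (giving coefficient $|\Sigma_t\gamma|$), both nonzero for $d\ge 2$. The error estimates on the Green's-function expansion can be made uniform on $|x|\ge R$ because $p$ has all moments, so the remainder integrals converge absolutely. Parts~1 and~2 are essentially mechanical.
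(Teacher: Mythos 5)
Your proposal is correct and follows essentially the same route as the paper's proof: direct substitution of the constant Kalman gain for part~1, the mixed-partials (curl-free) obstruction leading to the identity $K_j(\Sigma_t^{-1})_{i\ell}=K_i(\Sigma_t^{-1})_{j\ell}$ for part~2, and the $O(|x|^{-d})$ asymptotics of the Green's-function integral (with angular-dependent, nonvanishing leading coefficient) combined with the non-integrability of $|x|^{-d}$ and the exponential growth of $1/p$ for part~3. The only cosmetic differences are that the paper closes part~2 by contracting indices to obtain $\tr(\Sigma_t^{-1})\v=\Sigma_t^{-1}\v$ and invoking an eigenvalue contradiction where you use a rank-one argument, and that it obtains the divergence of $\int |\v_g|^2 p$ from that of $\int |\v_g| p$ via Cauchy--Schwarz rather than directly from exponential growth.
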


\begin{proof}
The Kalman gain solution~\eqref{eqn:multi_kalman} is     verified by direct substitution in the
    BVP~\eqref{e:bvp_divergence_multi} where the distribution $p$ is Gaussian.

 The proof of claim~2 follows by contradiction.
    Suppose a function $\phi$ exists such that $p \v = \nabla \phi$,
    then we have
    \begin{equation}
    \frac{\partial (\v p)_i}{\partial x_j} = \frac{\partial^2 \phi}{\partial x_j \partial x_i} = \frac{\partial^2 \phi}{\partial x_i \partial x_j} = \frac{\partial (\v p)_j}{\partial x_i},\quad \forall i,j\label{eqn:gradientVerify}
    \end{equation}
    where $(\v p)_i$ is the $i^{\text{th}}$ entry of vector
    $\v p$. By direct evaluation, we have
    \begin{equation}
    \frac{\partial (\v p)_i}{\partial x_j} = 2 \v_i \cdot \left(
      \Sigma_t^{-1} (x- \mu_t) \right)_j p.\nonumber
    \end{equation}
    Using~\eqref{eqn:gradientVerify}, we obtain
    \begin{equation}
    \v_i (\Sigma_t^{-1})_{jk} = \v_j (\Sigma_t^{-1})_{ik},\qquad \forall i,j,k \label{eqn:gradientCond}
    \end{equation}
    Setting $k=i$, summing over the index $i$ and
    using~\eqref{eqn:multi_kalman}, we arrive at
    \begin{equation}
    \tr(\Sigma_t^{-1}) \v  =\Sigma_t^{-1} \v,\nonumber
    \end{equation}
    where $\tr(\Sigma_t^{-1})$ denotes the trace of the matrix
    $\Sigma_t^{-1}$.  This provides a contradiction because $\v\not\equiv
    0$ and $\Sigma_t$ is a positive definite symmetric matrix with
    $|\Sigma_t|>0$.

We now establish claim~3.
For the solution $K_g$ as given by~\eqref{e:integral_form_of_solution_multi}:
\begin{equation*}
p\v_g(x,t) = \frac{1}{\sigma_W^2}\frac{1}{d\omega_d} \int \frac{y-x}{|y-x|^{d}} (h(y)-\hat{h})
p(y,t) \ud y
\end{equation*}
For this integral,  with $h(y)\equiv \gamma^T y$,  we have the
following asymptotic formula for  $|x|\sim\infty$,
\begin{equation*}
p\v_g(x,t) \sim C \frac{1}{|x|^{d}} + o(\frac{1}{|x|^{d}}),
\end{equation*}
where $C$ does not vary as a function of $|x|$ (its value depends only
upon the angular coordinates).  For example, in dimension $d=2$, $C$ is
given by
\[
C(x_1,x_2)=C(|x|\cos(\theta),|x|\sin(\theta)) =
-\frac{1}{d\omega_d}\begin{pmatrix}
\cos(2\theta) & \sin(2\theta)\\
\sin(2\theta) & \cos(2\theta)
\end{pmatrix}\frac{\Sigma_t \gamma}{\sigma_W^2},
\]
where $\frac{\Sigma_t \gamma}{\sigma_W^2} $ is the Kalman gain vector.


The result follows because $\frac{1}{p|x|^d} \rightarrow
\infty$ and $\frac{1}{|x|^{d}}$ is not integrable in $\Re^d$.  Using
the Cauchy-Schwarz inequality,
\[
\int |\v_g(x,t)|p(x,t) \ud x \le \left( \int |\v_g(x,t)|^2 p(x,t) \ud x \right)^{\frac{1}{2}},
\]
which shows that $\v_g$ is not square-integrable. 
\end{proof}

\bibliographystyle{plain}
\bibliography{strings,ACCPF,markov,CDC11,FPF_TAC}

\def\cprime{$'$}
\begin{thebibliography}{10}

\bibitem{baincrisan07}
A.~Bain and D.~Crisan.
\newblock {\em Fundamentals of Stochastic Filtering}.
\newblock Springer, Cambridge, Mass, 2010.

\bibitem{bertsi96a}
D.~P. Bertsekas and J.~N. Tsitsiklis.
\newblock {\em Neuro-Dynamic Programming}.
\newblock Atena Scientific, Cambridge, Mass, 1996.

\bibitem{budchelee07}
A.~Budhiraja, L.~Chen, and C.~Lee.
\newblock A survey of numerical methods for nonlinear filtering problems.
\newblock {\em Physica D: Nonlinear Phenomena}, 230(1-2):27 -- 36, 2007.

\bibitem{Chorin73}
A.~J. Chorin.
\newblock Numerical study of slightly viscous flow.
\newblock {\em J. Fluid Mech.}, 57:785--796, 1973.

\bibitem{Crisan_Doucet_02}
D.~Crisan and A.~Doucet.
\newblock A survey of convergence results on particle filtering methods for
  practitioners.
\newblock {\em IEEE Trans. Signal Process.}, 50(3):736--746, 2002.

\bibitem{CriXiong09}
D.~Crisan and J.~Xiong.
\newblock Approximate {McKean-Vlasov} representations for a class of {SPDEs}.
\newblock {\em Stochastics: An International Journal of Probability and
  Stochastic Processes}, pages 1--16, 2009.

\bibitem{DaumHuang10}
Fred Daum and Jim Huang.
\newblock Generalized particle flow for nonlinear filters.
\newblock In {\em Proc. SPIE}, pages 76980I--76980I--12, 2010.

\bibitem{DouFreGor01}
A.~Doucet, N.~de~Freitas, and N.~Gordon.
\newblock {\em Sequential {Monte}-{Carlo} Methods in Practice}.
\newblock Springer-Verlag, April 2001.

\bibitem{Bayesian_Brain}
K.~Doya, S.~Ishii, A.~Pouget, and R.~P.~N. Rao.
\newblock {\em Bayesian Brain}.
\newblock Comput. Neurosci. MIT Press, Cambridge, MA, 2007.

\bibitem{Evans:98}
L.~C. Evans.
\newblock {\em Partial Differential Equations}.
\newblock American Mathematical Society, 1998.

\bibitem{gorsalsmi93}
N.~J. Gordon, D.~J. Salmond, and A.~F.~M. Smith.
\newblock Novel approach to nonlinear/non-{Gaussian} {Bayesian} state
  estimation.
\newblock {\em IEE Proceedings F Radar and Signal Processing}, 140(2):107--113,
  1993.

\bibitem{Greengard:1987}
L.~Greengard and V.~Rokhlin.
\newblock A fast algorithm for particle simulations.
\newblock {\em J. Comput. Phys.}, 73:325--348, December 1987.

\bibitem{HandschinMayne69}
J.~E. Handschin and D.~Q. Mayne.
\newblock {Monte Carlo} techniques to estimate the conditional expectation in
  multi-stage nonlinear filtering.
\newblock {\em International Journal of Control}, 9(5):547--559, 1969.

\bibitem{huacaimal07}
M.~Huang, P.~E. Caines, and R.~P. Malhame.
\newblock Large-population cost-coupled {LQG} problems with nonuniform agents:
  Individual-mass behavior and decentralized $\epsilon$-{Nash} equilibria.
\newblock {\em IEEE Trans. Automat. Control}, 52(9):1560--1571, 2007.

\bibitem{kal80}
G.~Kallianpur.
\newblock {\em Stochastic filtering theory}.
\newblock Springer-Verlag, New York, 1980.

\bibitem{kun90}
H.~Kunita.
\newblock {\em {Stochastic Flows and Stochastic Differential Equations}}.
\newblock Cambridge University Press, Cambridge, 1990.

\bibitem{Kushner64SIAM}
H.~J. Kushner.
\newblock On the differential equations satisfied by conditional probability
  densities of {Markov} processes.
\newblock {\em SIAM J. on Control}, 2:106--119, 1964.

\bibitem{Leonard80}
A.~Leonard.
\newblock Vortex method for flow simulation.
\newblock {\em Journal of Computational Physics}, 37:289--335, 1980.

\bibitem{MitterNewton04}
S.~K. Mitter and N.~J. Newton.
\newblock A variational approach to nonlinear estimation.
\newblock {\em SIAM Journal on Control and Optimization}, 42(5):1813--1833,
  2003.

\bibitem{Oksendal_book}
B.~{\O}ksendal.
\newblock {\em Stochastic differential equations (6th ed.): an introduction
  with applications}.
\newblock Springer-Verlag, Inc., New York, NY, USA, 2005.

\bibitem{TiltonLizMehta12ACC}
A.~K. Tilton, E.~T. Hsiao-Wecksler, and P.~G. Mehta.
\newblock Filtering with rhythms: Application to estimation of gait cycle.
\newblock {\em In Proc. of American Control Conference}, pages 3433--3438, June
  2012.

\bibitem{Xiong}
J.~Xiong.
\newblock Particle approximations to the filtering problem in continuous time.
\newblock In D.~Crisan and B.~Rozovskii, editors, {\em The Oxford Handbook of
  Nonlinear Filtering}. Oxford University Press, 2011.

\bibitem{YangHuangMehta12ACC}
T.~Yang, G.~Huang, and P.~G. Mehta.
\newblock Joint probabilistic data association-feedback particle filter for
  multiple target tracking applications.
\newblock {\em In Proc. of American Control Conference}, pages 820--826, June
  2012.

\bibitem{YangMehtaMeyn_cdc11}
T.~Yang, P.~G. Mehta, and S.~P. Meyn.
\newblock Feedback particle filter with mean-field coupling.
\newblock {\em In Proc. of IEEE Conference on Decision and Control}, pages
  7909--7916, December 2011.

\bibitem{YangMehtaMeyn_acc11}
T.~Yang, P.~G. Mehta, and S.~P. Meyn.
\newblock A mean-field control-oriented approach to particle filtering.
\newblock {\em In Proc. of American Control Conference}, pages 2037--2043, June
  2011.

\bibitem{YinMehtaMeynShanbhag12TAC}
H.~Yin, P.~G. Mehta, S.~P. Meyn, and U.~V. Shanbhag.
\newblock Synchronization of coupled oscillator is a game.
\newblock {\em IEEE Trans. Automatic Control}, 57(4):920--935, 2012.

\end{thebibliography}
\end{document}